\documentclass[a4paper,12pt]{amsart}

\usepackage{amsmath}
\usepackage{amssymb}
\usepackage{amsfonts}
\usepackage{graphicx}
\usepackage{mathtools}
\usepackage[colorlinks]{hyperref}
\renewcommand\eqref[1]{(\ref{#1})} 

\graphicspath{ {images/} }
\setlength{\textwidth}{15.2cm}
\setlength{\textheight}{22.7cm}
\setlength{\topmargin}{0mm}
\setlength{\oddsidemargin}{3mm}
\setlength{\evensidemargin}{3mm}
\setlength{\footskip}{1cm}

\newcommand{\jp}[1]{{\left\langle{#1}\right\rangle}}
\title[Heat and wave type equation on compact Lie Groups]{Heat and wave type equations with non-local operators, I. Compact Lie groups}

\author[W. A.A. de Moraes]{Wagner A.A. de Moraes}
\address{
	Wagner A.A. de Moraes:
	\endgraf
	Departamento de MatemÃ¡tica
	\endgraf
	Universidade Federal do ParanÃ¡, C.P. 19096, CEP 81531-990, Curitiba
	\endgraf
	Brazil
	\endgraf
	{\it E-mail address} {\rm wagnermoraes@ufpr.br}}

\author[J. E. Restrepo]{Joel E. Restrepo}
\address{
	Joel E. Restrepo:
	\endgraf
	Department of Mathematics: Analysis, Logic and Discrete Mathematics
	\endgraf
	Ghent University, Krijgslaan 281, Building S8, B 9000 Ghent
	\endgraf
	Belgium
	\endgraf
	{\it E-mail address} {\rm joel.restrepo@ugent.be; cocojoel89@yahoo.es}}

\author[M. Ruzhansky]{Michael Ruzhansky}
\address{
	Michael Ruzhansky:
	\endgraf
	Department of Mathematics: Analysis, Logic and Discrete Mathematics
	\endgraf
	Ghent University, Krijgslaan 281, Building S8, B 9000 Ghent
	\endgraf
	Belgium
	\endgraf
	and
	\endgraf
	School of Mathematical Sciences
		\endgraf Queen Mary University of London 
			\endgraf
		United Kingdom
			\endgraf
	{\it E-mail address} {\rm michael.ruzhansky@ugent.be}}


\subjclass[2010]{22C05, 45K05, 35B40.}
\keywords{Heat type equations, Wave type equations, Compact Lie groups, Explicit solutions, Asymptotic estimates.}



\newtheoremstyle{theorem}
{10pt}          
{10pt}  
{\sl}  
{\parindent}     
{\bf}  
{. }    
{ }    
{}     
\theoremstyle{theorem}
\newtheorem{theorem}{Theorem}

\numberwithin{equation}{section}
\theoremstyle{plain}

\theoremstyle{definition}

\newtheoremstyle{defi}
{10pt}          
{10pt}  
{\rm}  
{\parindent}     
{\bf}  
{. }    
{ }    
{}     
\theoremstyle{defi}
\newtheorem{definition}[theorem]{Definition}
\newtheorem{remark}[theorem]{Remark}



\begin{document}
 	\begin{abstract}
We prove existence, uniqueness and give the analytical solution of heat and wave type equations on a compact Lie group $G$ by using a non-local (in time) differential operator and a positive left invariant operator (maybe unbounded) acting on the group. For heat type equations, solutions are given in $L^q(G)$ for data in $L^p(G)$ with $1<p\leqslant 2\leqslant q<+\infty$. We also provide some asymptotic estimates (large-time behavior) for the solutions. Some examples are given. Also, for wave type equations, we give the solution on some suitable Sobolev spaces over $L^2(G)$. We complement our results, by studying a multi-term heat type equation as well.        
	\end{abstract}
	\maketitle
	\tableofcontents

\section{Introduction}

	Studies of integro-differential equations with non-local operators have started long time ago. Specially, diffusion equations, wave equations and Cauchy problems were considered with the use of the Riemann-Liouville fractional integro-differential operators. We refer,  e.g. to the expository paper (223 pages) of M. Riesz about the integral of Riemann-Liouville for the Cauchy problem \cite{Riesz}. These ideas continued with people like Miller \cite{memory2}, Schneider and Wyss \cite{FractionalDiffusion}, who studied fractional diffusion and wave equations, see also \cite{memory1}. We also mention the papers published almost simultaneously (with respect to Schneider and Wyss) by Fujita in \cite{Fujita1,Fujita2}, where one presented a full investigation of integro-differential equations interpolating between heat and wave equations, using techniques which are different from previous ones. The latter ideas were mainly developed on the real axis.    
In the last decades, people have been interested to extend the previous works to $\mathbb{R}^n$. In this new setting, new approaches were developed and different equations of heat and wave type were studied by using time-variable non-local integro-differential operators of Riemann-Liouville. Researchers have found explicit solutions of the considered equations and have also answered questions about the well-posedness, regularity and large time behaviour of the solutions, we cite the following papers \cite{uno1,uno2,uno3,uno4,uno5} and references therein. Note that the idea of applying non-local operators in time variable has been also explored for other type of equations and problems. For instance, we have parabolic problems \cite{para}, Cauchy equations \cite{cauchy,Riesz}, fractional diffusion equations with bounded domains \cite{otherdomain}, initial boundary value problems and their applications \cite{initial}, Schauder estimates \cite{apli2},  continuous time random walks \cite{apli1}, just to mention a few of them, etc.        

In the last 40 years, the above classical and fundamental problems have turned out to be studied in some other more general scenarios like nilpotent Lie groups, where the harmonic and microlocal analysis of the group play an important role. For the heat equation on compact Lie groups see e.g. \cite{heat-compact2,heat-fundamental,heat-compact1}, while for the wave equation on such groups see \cite{wave-compact1,wave-compact2}. More general studies have been done on other Lie groups for the wave equation in \cite{homogeneous,palmieri}, and on the Heisenberg group in \cite{heisen}.


In this paper, in Sections \ref{heat-section} and \ref{wave-section}, we study heat and wave type equations with a non-local (in time) differential operator of Caputo-type (so-called Dzhrbashian-Caputo fractional derivative), which allows one to interpolate between the classical heat and wave equations, on a compact Lie group $G$ along with a positive left invariant operator (maybe unbounded). The main result is about the $L^p(G)-L^q(G)$ estimates for the solution of the latter equation. We provide the explicit representation and the time decay rate for the solution. We illustrate the obtained results by some examples. In this part, we use some recent results on $L^p(G)-L^q(G)$ multipliers on locally compact groups \cite{RR2020}. This allows us to overcome and use the analysis on the group to just focus on the trace of the spectral projections of the considered left invariant operator. In the second part of the paper, in Section \ref{wave-section}, we study wave type equations. For this case, we can not use the latter approach. Nevertheless, we study the wave type equations in some suitable Sobolev spaces over $L^2(G)$ by using the Fourier analysis of the group. We find an explicit solution of the considered equation on $L^2(G)$. We complement the above results by studying a multi-term heat type equation, see Section \ref{multi-section}.    

Let us now give a brief sketch of the problems and their solutions, which will be resolved throughout the whole paper. In all results we have found the explicit representation of the solution of the considered equations. 

So, we start by studying the following heat type equation: 
\begin{equation}\label{Heat-intro}
	\left\{\begin{aligned}
		^{C}\partial_{t}^{\alpha}u(t,x)+\mathcal{L}u(t,x)&=0, \quad t>0,\,\, x\in G, \\
		u(t,x)|_{_{_{t=0}}}&=u_0(x),
	\end{aligned}
	\right.
\end{equation}
where $^{C}\partial_{t}^{\alpha}$ is the Dzhrbashyan-Caputo fractional derivative from \eqref{caputo-alternative-uso}, $G$ is a compact Lie group, $\mathcal{L}$ is a positive linear left invariant operator on $G$ (maybe unbounded) and $0<\alpha\leqslant1$.

The main result can be summarised as follows:

\begin{theorem}
	If $u_0\in L^p(G)$ for $1<p\leqslant 2$ and the condition \eqref{need} is satisfied then there exists a unique solution $u\in \mathcal{C}\big([0,+\infty); L^q(G)\big)$ for $2\leqslant q<+\infty$ of the Cauchy problem \eqref{Heat-intro}. In particular, if the condition \eqref{asymtotic-trace} holds then for any $1<p\leqslant 2\leqslant q<+\infty$ such that $\frac{1}{\lambda}>\frac{1}{p}-\frac{1}{q}$ we have the following time decay rate for the solution of equation \eqref{Heat-intro}: 
	\[
	\|u(t,\cdot)\|_{L^q(G)}\leqslant C_{\alpha,\lambda,p,q}t^{-\alpha\lambda\left(\frac{1}{p}-\frac{1}{q}\right)}\|u_0\|_{L^p(G)},    
	\]
	with the constant $C_{\alpha,\lambda,p,q}$ independent of $u_0$ and $t>0.$
\end{theorem}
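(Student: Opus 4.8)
The plan is to pass to the group Fourier transform, solve the resulting decoupled family of scalar fractional ODEs explicitly via the Mittag--Leffler function, and then convert the pointwise (in the spectral variable) bounds into $L^p(G)$--$L^q(G)$ operator bounds by means of the $L^p$--$L^q$ multiplier theorem of \cite{RR2020}. Concretely: since $\mathcal{L}$ is positive and left invariant, the Peter--Weyl theorem and the non-commutative Fourier analysis on $G$ diagonalize it, and it has a non-negative (discrete) spectrum with symbol $\sigma_{\mathcal{L}}(\xi)\ge 0$. Taking the Fourier transform of \eqref{Heat-intro} in $x$, each Fourier component $\widehat u(t)_\xi$ satisfies
\[
{}^{C}\partial_t^\alpha\,\widehat u(t)_\xi + \sigma_{\mathcal{L}}(\xi)\,\widehat u(t)_\xi = 0,\qquad \widehat u(0)_\xi = \widehat{u_0}(\xi),
\]
whose unique solution is the classical $\widehat u(t)_\xi = E_\alpha\big(-\sigma_{\mathcal{L}}(\xi)\,t^\alpha\big)\widehat{u_0}(\xi)$, with $E_\alpha(z)=\sum_{k\ge 0}z^k/\Gamma(\alpha k+1)$ the Mittag--Leffler function. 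Hence the candidate solution is $u(t,\cdot)=E_\alpha(-t^\alpha\mathcal{L})u_0$, defined through the functional calculus of $\mathcal{L}$.

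One then checks that this candidate is a solution in the required class and is the only one. For $0<\alpha\le 1$ the function $r\mapsto E_\alpha(-r)$ is completely monotone on $[0,+\infty)$, in particular bounded with $E_\alpha(0)=1$, so $E_\alpha(-t^\alpha\mathcal{L})$ is bounded on $L^2(G)$ for each $t\ge 0$; dominated convergence on the Fourier side gives $u\in\mathcal{C}\big([0,+\infty);L^2(G)\big)$ with $u(0,\cdot)=u_0$, and termwise application of ${}^{C}\partial_t^\alpha$---legitimate since for fixed $\sigma>0$ the map $t\mapsto E_\alpha(-\sigma t^\alpha)$ is smooth on $(0,+\infty)$ with bounds uniform on compact $t$-intervals coming from the decay of $E_\alpha$ recalled below---shows that \eqref{Heat-intro} holds. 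Uniqueness follows because any solution has Fourier components solving the scalar Cauchy problem above, whose solution is unique.

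The core is the $L^p$--$L^q$ bound. By \cite{RR2020}, for $1<p\le 2\le q<\infty$ and a suitable (e.g.\ monotone decreasing) bounded Borel function $\varphi$ one has, schematically,
\[
\big\|\varphi(\mathcal{L})\big\|_{L^p(G)\to L^q(G)}\ \lesssim\ \sup_{s>0}\ \varphi(s)\,\big(\Tr\,\mathbf{1}_{(0,s]}(\mathcal{L})\big)^{\frac1p-\frac1q},
\]
where $\mathbf{1}_{(0,s]}(\mathcal{L})$ is the spectral projection of $\mathcal{L}$ onto $(0,s]$. Applying this with $\varphi_t(s)=E_\alpha(-t^\alpha s)$ (decreasing in $s$), finiteness of the right-hand side for each fixed $t>0$ is precisely what \eqref{need} guarantees, which gives $u(t,\cdot)\in L^q(G)$ and, with the continuity from the previous step, $u\in\mathcal{C}\big([0,+\infty);L^q(G)\big)$. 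For the time decay I would use the sharp estimate $|E_\alpha(-r)|\le C_\alpha/(1+r)$, $r\ge 0$ (Dzhrbashian; Gorenflo--Mainardi), together with the trace growth encoded in \eqref{asymtotic-trace} (morally $\Tr\,\mathbf{1}_{(0,s]}(\mathcal{L})\lesssim s^{\lambda}$). Substituting $r=t^\alpha s$,
\[
\sup_{s>0}\,|E_\alpha(-t^\alpha s)|\,s^{\lambda(\frac1p-\frac1q)}\ \lesssim\ \sup_{s>0}\,\frac{s^{\lambda(\frac1p-\frac1q)}}{1+t^\alpha s}\ =\ t^{-\alpha\lambda(\frac1p-\frac1q)}\,\sup_{r>0}\,\frac{r^{\lambda(\frac1p-\frac1q)}}{1+r},
\]
the last supremum being finite under the standing hypothesis $\frac1\lambda>\frac1p-\frac1q$. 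This produces $\|u(t,\cdot)\|_{L^q(G)}\le C_{\alpha,\lambda,p,q}\,t^{-\alpha\lambda(\frac1p-\frac1q)}\|u_0\|_{L^p(G)}$ with the constant independent of $u_0$ and $t$. The one-line optimization at the end is routine; the genuine work, and the main obstacle, is twofold: setting up the Fourier-side reduction rigorously for a possibly unbounded $\mathcal{L}$ (convergence of the spectral expansion, legitimacy of termwise fractional differentiation, matching of the Caputo derivative with the initial datum), and verifying that $\varphi_t$ satisfies the hypotheses of the \cite{RR2020} multiplier theorem---which is exactly where the Mittag--Leffler decay estimate and the trace conditions \eqref{need} and \eqref{asymtotic-trace} are used.
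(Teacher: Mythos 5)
Your proposal follows essentially the same route as the paper: reduction by the group Fourier transform to scalar fractional ODEs solved by the Mittag--Leffler function, identification of the propagator $E_\alpha(-t^\alpha\mathcal{L})$, and then the $L^p(G)$--$L^q(G)$ multiplier theorem of Akylzhanov--Ruzhansky with the noncommutative Lorentz norm expressed through $\tau\big(E_{(0,s)}(\mathcal{L})\big)$, combined with the uniform bound $E_\alpha(-r)\lesssim (1+r)^{-1}$. The only (harmless) difference is cosmetic: you extract the decay rate by the scaling substitution $r=t^\alpha s$, whereas the paper optimizes $s\mapsto s^{\lambda/r}/(\Gamma(1+\alpha)+t^\alpha s)$ explicitly by locating its critical point, both yielding $t^{-\alpha\lambda(\frac1p-\frac1q)}$ under $\frac1\lambda>\frac1p-\frac1q$.
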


The order of the time-decay above seems to be sharp. At least, in the case of $\mathbb{R}^n$, we can recover the sharp estimate given in \cite[Theorem 3.3, item (i)]{uno2} whenever $\frac{2}{n}>\frac{1}{p}-\frac{1}{q}$. The latter condition is the same given by $\frac{1}{\lambda}>\frac{1}{p}-\frac{1}{q}$ where $\lambda=n/2$, see \textit{Example 1} in Subsection \ref{examples}. 

For the sub-Laplacian on the group and on the torus, some explicit examples are given. More details can be found in Subsection \ref{examples}.  

We also study wave type equations. In fact, we focus on the following equation, which interpolates between wave (without being wave for $\alpha<2$) and heat types:  
\begin{equation}\label{wave-intro}
	\left\{ \begin{aligned}
		^{C}\partial_{t}^{\alpha}u(t,x)+\mathcal{L}u(t,x)&=0, \quad t>0,\,\, x\in G, \\
		u(t,x)|_{_{_{t=0}}}&=u_0(x), \\
		\partial_t u(t,x)|_{_{_{t=0}}}&=u_1(x),
	\end{aligned}
	\right.
\end{equation}
where $\mathcal{L}$ is a positive linear left invariant operator and $1<\alpha<2$. For this type of equations we use the Fourier analysis on the group to prove existence of a solution on a Sobolev space $\mathcal{H}_{\mathcal{L}}^{\beta}(G)$ $(\beta\in\mathbb{R})$ over $L^2(G)$ since we can not use the same approach as in the heat type equations. All details are provided in Section \ref{wave-section}. Thus, we will show that:

\begin{theorem}
Let $G$ be a compact Lie group, $1<\alpha<2$ and $\beta\in\mathbb{R}$. Let $\mathcal{L}$ be a positive linear left invariant operator on $G$. 
	\begin{enumerate}
		\item If $(u_0,u_1)\in \mathcal{H}_{\mathcal{L}}^{\beta}(G)\times \mathcal{H}_{\mathcal{L}}^{\beta}(G)$ then there exists a unique solution $u(t,\cdot)\in \mathcal{H}^{\beta+2}_{\mathcal{L}}(G)$ for any $t\in(0,+\infty)$ for the Cauchy problem \eqref{wave-intro} given explicitly by
		\begin{equation}\label{solution-wave-intro}
			u(t,x)=E_\alpha(-t^{\alpha}\mathcal{L})u_0(x)+tE_{\alpha,2}(-t^{\alpha}\mathcal{L})u_1(x),\quad x\in G,
		\end{equation}
		and we have 
\[
			\|u(t,\cdot)\|_{\mathcal{H}^{\beta+2}_{\mathcal{L}}(G)}\lesssim (1+t^{-\alpha})\|u_0\|_{\mathcal{H}_{\mathcal{L}}^{\beta}(G)}+t(1+t^{-\alpha})\|u_1\|_{\mathcal{H}_{\mathcal{L}}^{\beta}(G)}.
\]
		\item If $(u_0,u_1)\in \mathcal{H}^{\beta}_{\mathcal{L}}(G)\times \mathcal{H}^{\beta+2}_{\mathcal{L}}(G)$ then there exists a unique solution $u(t,\cdot)\in \mathcal{H}^{\beta+2}_{\mathcal{L}}(G)$ for any $t\in(0,+\infty)$ for the Cauchy problem \eqref{wave-intro} given explicitly by \eqref{solution-wave-intro}, and we have
\[
\|u(t,\cdot)\|_{\mathcal{H}^{\beta+2}_{\mathcal{L}}(G)}\lesssim (1+t^{-\alpha})\|u_0\|_{\mathcal{H}_{\mathcal{L}}^{\beta}(G)}+t\|u_1\|_{\mathcal{H}_{\mathcal{L}}^{\beta+2}(G)}.
			\]
	\item If $(u_0,u_1)\in \mathcal{H}^{\beta}_{\mathcal{L}}(G)\times \mathcal{H}_{\mathcal{L}}^{\beta+\frac{2(\alpha-1)}{\alpha}}(G)$ then there exists a unique solution $u(t,\cdot)\in \mathcal{H}^{\beta+2}_{\mathcal{L}}(G)$ for any $t\in(0,+\infty)$ for the Cauchy problem \eqref{wave-intro} given explicitly by \eqref{solution-wave-intro}, and we have
\[
\|u(t,\cdot)\|_{\mathcal{H}^{\beta+2}_{\mathcal{L}}(G)}\lesssim (1+t^{-\alpha})\|u_0\|_{\mathcal{H}_{\mathcal{L}}^{\beta}(G)}+t\|u_1\|_{\mathcal{H}_{\mathcal{L}}^{\beta}(G)}+\|u_1\|_{\mathcal{H}_{\mathcal{L}}^{\beta+\frac{2(\alpha-1)}{\alpha}}(G)}.
			\]
			
			\item If $(u_0,u_1)\in \mathcal{H}^{\beta+2}_{\mathcal{L}}(G)\times \mathcal{H}^{\beta}_{\mathcal{L}}(G)$ then there exists a unique solution $u(t,\cdot)\in \mathcal{H}^{\beta+2}_{\mathcal{L}}(G)$ for any $t\in(0,+\infty)$ for the Cauchy problem \eqref{wave-intro} given explicitly by \eqref{solution-wave-intro}, and we have
\[
\|u(t,\cdot)\|_{\mathcal{H}^{\beta+2}_{\mathcal{L}}(G)}\lesssim \|u_0\|_{\mathcal{H}_{\mathcal{L}}^{\beta+2}(G)}+t(1+t^{-\alpha})\|u_1\|_{\mathcal{H}_{\mathcal{L}}^{\beta}(G)}.
			\]
\item If $(u_0,u_1)\in \mathcal{H}^{\beta+2}_{\mathcal{L}}(G)\times \mathcal{H}^{\beta+2}_{\mathcal{L}}(G)$ then there exists a unique solution $u(t,\cdot)\in \mathcal{H}^{\beta+2}_{\mathcal{L}}(G)$ for any $t\in(0,+\infty)$ for the Cauchy problem \eqref{wave-intro} given explicitly by \eqref{solution-wave-intro}, and we have
\[
\|u(t,\cdot)\|_{\mathcal{H}^{\beta+2}_{\mathcal{L}}(G)}\lesssim \|u_0\|_{\mathcal{H}_{\mathcal{L}}^{\beta+2}(G)}+t\|u_1\|_{\mathcal{H}_{\mathcal{L}}^{\beta+2}(G)}.
	\]
	\item If $(u_0,u_1)\in \mathcal{H}^{\beta+2}_{\mathcal{L}}(G)\times \mathcal{H}_{\mathcal{L}}^{\beta+\frac{2(\alpha-1)}{\alpha}}(G)$ then there exists a unique solution $u(t,\cdot)\in \mathcal{H}^{\beta+2}_{\mathcal{L}}(G)$ for any $t\in(0,+\infty)$ for the Cauchy problem \eqref{wave-intro} given explicitly by \eqref{solution-wave-intro}, and we have
\[
\|u(t,\cdot)\|_{\mathcal{H}^{\beta+2}_{\mathcal{L}}(G)}\lesssim \|u_0\|_{\mathcal{H}_{\mathcal{L}}^{\beta+2}(G)}+t\|u_1\|_{\mathcal{H}_{\mathcal{L}}^{\beta}(G)}+\|u_1\|_{\mathcal{H}_{\mathcal{L}}^{\beta+\frac{2(\alpha-1)}{\alpha}}(G)}.
			\]
	\end{enumerate}
	Additionally, we also have that  \begin{equation*}
		\|\partial_{t}u(t,\cdot)\|_{\mathcal{H}^{\beta}_{\mathcal{L}}(G)}\lesssim \left\{
		\begin{array}{rccl}
			t^{-1}\|u_0\|_{\mathcal{H}^{\beta}_{\mathcal{L}}(G)}+\|u_1\|_{\mathcal{H}^{\beta}_{\mathcal{L}}(G)},& u_{0},u_1\in \mathcal{H}^{\beta}_{\mathcal{L}}(G),\\
			\|u_0\|_{\mathcal{H}^{\beta+2/\alpha}_{\mathcal{L}}(G)}+\|u_1\|_{\mathcal{H}^{\beta}_{\mathcal{L}}(G)},& u_0\in \mathcal{H}^{\beta+2/\alpha}_{\mathcal{L}}(G),\, u_1\in \mathcal{H}^{\beta}_{\mathcal{L}}(G),
		\end{array}
		\right.
	\end{equation*}
	for all $t\in(0,+\infty).$ The constants in the above estimates are independent of $t>0.$    
\end{theorem}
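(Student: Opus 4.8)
The plan is to pass to the Fourier side of $G$, where \eqref{wave-intro} decouples into a family of scalar fractional Cauchy problems indexed by the spectrum of $\mathcal{L}$, to solve those by Mittag-Leffler functions, and then to read every estimate off uniform bounds on the resulting Fourier multipliers. First I would use that $\mathcal{L}$ is positive and left invariant: at each irreducible unitary representation $\xi\in\widehat{G}$ the symbol $\sigma_{\mathcal{L}}(\xi)$ is a positive semidefinite Hermitian matrix, and diagonalising it --- here self-adjointness is what matters, since it excludes any nontrivial Jordan block --- turns $\mathcal{L}$, on the Fourier side, into multiplication by its nonnegative eigenvalues $\lambda$. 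The group Fourier transform then reduces \eqref{wave-intro}, componentwise, to the scalar problems ${}^{C}\partial_t^{\alpha} v_\lambda(t)+\lambda v_\lambda(t)=0$, $v_\lambda(0)=a$, $v_\lambda'(0)=b$, whose unique solution, by the classical theory of linear fractional differential equations, is $v_\lambda(t)=E_\alpha(-\lambda t^\alpha)a+tE_{\alpha,2}(-\lambda t^\alpha)b$; reassembling by Fourier inversion and the functional calculus for $\mathcal{L}$ produces exactly \eqref{solution-wave-intro}. Uniqueness in $\mathcal{H}^{\beta+2}_{\mathcal L}(G)$ is then immediate, since a solution with vanishing data has every Fourier coefficient equal to $0$.

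The analytic core will be a handful of bounds, uniform in $\lambda\ge0$ and $t>0$, all deduced from the elementary estimates $|E_{\alpha,\mu}(-z)|\le C_{\alpha,\mu}$ and $|E_{\alpha,\mu}(-z)|\le C_{\alpha,\mu}(1+z)^{-1}$ for $z\ge0$, valid for $\mu\in\{1,2,\alpha\}$. From these I would extract, for instance, $(1+\lambda)|E_\alpha(-\lambda t^\alpha)|\lesssim 1+t^{-\alpha}$ and $(1+\lambda)|tE_{\alpha,2}(-\lambda t^\alpha)|\lesssim t(1+t^{-\alpha})$, as well as the mixed bound
\[
(1+\lambda)\,|tE_{\alpha,2}(-\lambda t^\alpha)|\le \frac{t}{1+\lambda t^\alpha}+\frac{\lambda t}{1+\lambda t^\alpha}\lesssim t+(1+\lambda)^{(\alpha-1)/\alpha},
\]
the last step because $s\mapsto s^{1/\alpha}(1+s)^{-1}$ is bounded on $[0,\infty)$ (here $0<1/\alpha<1$). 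Recalling that $\mathcal{H}^s_{\mathcal L}(G)=(1+\mathcal{L})^{-s/2}L^2(G)$, so that by Plancherel the $\mathcal{H}^s_{\mathcal L}(G)$-norm is the $\ell^2$-norm of the Fourier coefficients with the $\lambda$-eigenspace of $\mathcal{L}$ weighted by $(1+\lambda)^{s/2}$, each such pointwise bound is an operator bound between the appropriate Sobolev spaces; splitting \eqref{solution-wave-intro} into its $u_0$- and $u_1$-parts and combining these bounds (together with the trivial $|E_\alpha(-z)|,|E_{\alpha,2}(-z)|\le C$) with the hypothesis on $(u_0,u_1)$ in each case then yields items (1)--(6).

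For the final displayed estimate I would differentiate \eqref{solution-wave-intro} using $\frac{d}{dt}E_\alpha(-\lambda t^\alpha)=-\lambda t^{\alpha-1}E_{\alpha,\alpha}(-\lambda t^\alpha)$ and $\frac{d}{dt}\big(tE_{\alpha,2}(-\lambda t^\alpha)\big)=E_\alpha(-\lambda t^\alpha)$, which gives $\partial_t u(t,\cdot)=-t^{\alpha-1}\mathcal{L}E_{\alpha,\alpha}(-t^\alpha\mathcal L)u_0+E_\alpha(-t^\alpha\mathcal L)u_1$; in particular $u(0,\cdot)=u_0$ (as $E_\alpha(0)=1$) and $\partial_t u(0,\cdot)=u_1$ (as $t^{\alpha-1}\to0$ when $t\to0^+$, using $\alpha>1$), so the initial conditions in \eqref{wave-intro} are attained. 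The $\mathcal{H}^{\beta}_{\mathcal L}(G)$-bound then reduces to $\lambda t^{\alpha-1}|E_{\alpha,\alpha}(-\lambda t^\alpha)|\lesssim\min\{t^{-1},(1+\lambda)^{1/\alpha}\}$: the bound $t^{-1}$ from $\lambda t^\alpha(1+\lambda t^\alpha)^{-1}\le1$, and the bound $(1+\lambda)^{1/\alpha}$ by writing $\lambda t^{\alpha-1}(1+\lambda t^\alpha)^{-1}=\lambda^{1/\alpha}(\lambda t^\alpha)^{1-1/\alpha}(1+\lambda t^\alpha)^{-1}$ and using boundedness of $s\mapsto s^{1-1/\alpha}(1+s)^{-1}$; combined with $|E_\alpha(-z)|\le C$ for the $u_1$-term, this gives the two displayed cases.

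The hard part will not be any single estimate but the rigorous verification that \eqref{solution-wave-intro} genuinely solves \eqref{wave-intro}: one must apply ${}^{C}\partial_t^{\alpha}$ and $\mathcal{L}$ term by term to the Fourier series and check that each resulting series converges in the space where the equation is to hold. The multiplier estimates above are precisely what legitimises these interchanges and, at the same time, places $u(t,\cdot)$ in $\mathcal{H}^{\beta+2}_{\mathcal L}(G)$ for every $t>0$ --- note that for $u_0\in\mathcal{H}^\beta_{\mathcal L}(G)$ the gain-of-two estimate carries the factor $1+t^{-\alpha}$, which is exactly why the solution is asserted on $(0,+\infty)$ rather than on $[0,+\infty)$. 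Everything else, namely the scalar fractional-ODE facts and the differentiation formulas and asymptotic bounds for Mittag-Leffler functions, is classical and may be quoted.
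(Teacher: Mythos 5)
Your proposal is correct and follows essentially the same route as the paper's proof: group Fourier transform and diagonalization of $\sigma_{\mathcal{L}}(\xi)$ to decouple into scalar fractional ODEs, the Mittag-Leffler representation $E_\alpha(-t^\alpha\mathcal{L})u_0+tE_{\alpha,2}(-t^\alpha\mathcal{L})u_1$, Plancherel together with the uniform bound $|E_{\alpha,\mu}(-z)|\lesssim(1+z)^{-1}$ (including the same $\mu^{(\alpha-1)/\alpha}$ and $\mu^{1/\alpha}$ interpolation bounds obtained in the paper by optimizing $t/(1+\mu t^\alpha)$ and $t^{\alpha-1}/(1+\mu t^\alpha)$ in $t$), and term-by-term differentiation giving $\partial_t u=-t^{\alpha-1}\mathcal{L}E_{\alpha,\alpha}(-t^\alpha\mathcal{L})u_0+E_\alpha(-t^\alpha\mathcal{L})u_1$. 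The only cosmetic difference is that you quote the scalar fractional ODE solution directly rather than re-deriving it via the Laplace transform as the paper does.
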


The above studies are complemented in Section \ref{multi-section} by studying the following multi-term heat type equations: 
\begin{equation}\label{multi-intro}
	\left\{ \begin{aligned}
		\prescript{C}{}\partial_{t}^{\alpha_0}u(t,x)+\gamma_1\prescript{C}{}\partial_{t}^{\alpha_1}u(t,x)+\cdots+\gamma_m\prescript{C}{}\partial_{t}^{\alpha_m}u(t,x)+\mathcal{L}u(t,x)&=0,\,\,  \\
		u(t,x)|_{_{_{t=0}}}&=u_0(x),
	\end{aligned}
	\right.
\end{equation}
for $t>0$ and $x\in G$, where $\mathcal{L}$ is a positive linear left invariant operator, $\gamma_i>0$ $(i=1,\ldots,m)$ and $0<\alpha_m<\alpha_{m-1}<\cdots<\alpha_1<\alpha_0\leqslant1$. So, we show that:
\begin{theorem}
	Let $G$ be a compact Lie group and $\beta\in\mathbb{R}$. Suppose also that $\mathcal{L}$ is a positive linear left invariant operator on $G$. 
	\begin{enumerate}
		\item If $u_0\in \mathcal{H}^{\beta}_{\mathcal{L}}(G)$ then there exists a unique solution $u(t,\cdot)\in \mathcal{H}^{\beta+2}_{\mathcal{L}}(G)$ for any $t\in(0,T]$ for the Cauchy problem \eqref{multi-intro} given explicitly by
		\begin{align}
			&u(t,x)= \nonumber\\
			&\sum_{k=0}^{m}t^{\alpha_0-\alpha_k}E_{(\alpha_0-\alpha_1,\ldots,\alpha_0-\alpha_m,\alpha_0),\alpha_0-\alpha_k+1}(-\gamma_1 t^{\alpha_0-\alpha_1},\ldots,-\gamma_m t^{\alpha_0-\alpha_m},-t^{\alpha_0}\mathcal{L}) u_0(x),\label{multi-solution-intro}
		\end{align}
		and we have 
		\[
		\|u(t,\cdot)\|_{\mathcal{H}_{\mathcal{L}}^{\beta+2}(G)}\leqslant C_{T,\alpha_0,\ldots,\alpha_m}\left(\sum_{k=0}^{m}\gamma_k t^{\alpha_0-\alpha_k}\right)(1+t^{-\alpha_0})\|u_0\|_{\mathcal{H}^{\beta}_{\mathcal{L}}(G)}. 
		\]
		\item If $u_0\in \mathcal{H}_{\mathcal{L}}^{\beta+2}(G)$ then there exists a unique solution $u(t,\cdot)\in \mathcal{H}^{\beta+2}_{\mathcal{L}}(G)$ for any $t\in(0,T]$ for the Cauchy problem \eqref{multi-intro} given explicitly by \eqref{multi-solution-intro}, and we have 
		\[
		\|u(t,\cdot)\|_{\mathcal{H}_{\mathcal{L}}^{\beta+2}(G)}\leqslant C_{T,\alpha_0,\ldots,\alpha_m}\left(\sum_{k=0}^{m}\gamma_k t^{\alpha_0-\alpha_k}\right)\|u_0\|_{\mathcal{H}^{\beta+2}_{\mathcal{L}}(G)}. 
		\] 
	\end{enumerate}
\end{theorem}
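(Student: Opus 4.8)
The plan is to diagonalise \eqref{multi-intro} with the group Fourier transform, reducing it to a family of scalar multi-term fractional Cauchy problems indexed by the spectrum of $\mathcal{L}$. By Peter--Weyl and the positivity of the left invariant operator $\mathcal{L}$, we may work with its matrix symbol $\sigma_{\mathcal{L}}(\xi)$, $[\xi]\in\widehat{G}$, which after a unitary choice of bases in the representation spaces we take diagonal with nonnegative entries. Taking Fourier coefficients in \eqref{multi-intro} and writing $\nu\geqslant0$ for a generic diagonal entry of $\sigma_{\mathcal{L}}(\xi)$, the equation decouples, entry by entry, into
\[
\prescript{C}{}\partial_{t}^{\alpha_0}v+\gamma_1\prescript{C}{}\partial_{t}^{\alpha_1}v+\cdots+\gamma_m\prescript{C}{}\partial_{t}^{\alpha_m}v+\nu v=0,\qquad v(0)=v_0,
\]
with $v_0$ the matching Fourier coefficient of $u_0$; since $0<\alpha_m<\cdots<\alpha_0\leqslant1$, one initial condition determines $v$.

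By the Laplace transform method for multi-term fractional differential equations, this scalar problem has the unique solution $v(t)=\Phi_\nu(t)\,v_0$, where
\[
\Phi_\nu(t)=\sum_{k=0}^{m}t^{\alpha_0-\alpha_k}E_{(\alpha_0-\alpha_1,\ldots,\alpha_0-\alpha_m,\alpha_0),\,\alpha_0-\alpha_k+1}\big(-\gamma_1 t^{\alpha_0-\alpha_1},\ldots,-\gamma_m t^{\alpha_0-\alpha_m},-\nu t^{\alpha_0}\big)
\]
is the function of $\nu$ obtained from \eqref{multi-solution-intro} by replacing $\mathcal{L}$ with $\nu$ and dropping $u_0$. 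Reassembling the Fourier coefficients --- on each block $\widehat{u}(t,\xi)$ is obtained from $\widehat{u_0}(\xi)$ by multiplying each entry by $\Phi_\nu(t)$ with $\nu$ the corresponding diagonal entry of $\sigma_{\mathcal{L}}(\xi)$ --- and inverting the Fourier transform produces exactly \eqref{multi-solution-intro}, where $E_{(\ldots),\,\alpha_0-\alpha_k+1}(\ldots,-t^{\alpha_0}\mathcal{L})$ is the operator defined by the functional (Fourier-multiplier) calculus of $\mathcal{L}$. The initial condition follows from $\Phi_\nu(0)=1$, and uniqueness of $u$ in the relevant solution class reduces, mode by mode, to uniqueness for the scalar problem above.

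The heart of the argument is to show that \eqref{multi-solution-intro} really defines an element of $\mathcal{H}^{\beta+2}_{\mathcal{L}}(G)$ with the stated bounds, i.e.\ to control the Fourier multiplier $\nu\mapsto\Phi_\nu(t)$ uniformly in $\nu\geqslant0$ and $t\in(0,T]$. Using the known decay/boundedness estimates for the multivariate Mittag--Leffler function with nonpositive arguments, together with the elementary fact that $\gamma_j t^{\alpha_0-\alpha_j}$ ($j=1,\ldots,m$) stays bounded for $t\in(0,T]$, one estimates each term of $\Phi_\nu(t)$ to obtain
\[
\sup_{\nu\geqslant0}|\Phi_\nu(t)|\leqslant C_T\sum_{k=0}^{m}\gamma_k t^{\alpha_0-\alpha_k},\qquad
\sup_{\nu\geqslant0}(1+\nu)|\Phi_\nu(t)|\leqslant C_T\Big(\sum_{k=0}^{m}\gamma_k t^{\alpha_0-\alpha_k}\Big)\big(1+t^{-\alpha_0}\big),
\]
the factor $t^{-\alpha_0}$ coming from $(1+\nu)(1+\nu t^{\alpha_0})^{-1}\leqslant\max(1,t^{-\alpha_0})$. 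Since comparing the $\mathcal{H}^{\beta}_{\mathcal{L}}$ and $\mathcal{H}^{\beta+2}_{\mathcal{L}}$ norms amounts on the Fourier side to inserting the factor $(1+\nu)$, Plancherel's theorem turns the first bound into item (2) (data in $\mathcal{H}^{\beta+2}_{\mathcal{L}}$) and the second into item (1) (data in $\mathcal{H}^{\beta}_{\mathcal{L}}$); in particular the multiplier is bounded even though $\mathcal{L}$ may be unbounded, so $u(t,\cdot)\in\mathcal{H}^{\beta+2}_{\mathcal{L}}(G)$ for each $t\in(0,T]$, with $t\mapsto u(t,\cdot)$ continuous there.

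It remains to verify that the function so constructed is a genuine solution of \eqref{multi-intro}: applying each $\prescript{C}{}\partial_{t}^{\alpha_j}$ termwise on the Fourier side is legitimate thanks to the uniform-in-$\nu$ bounds on $\Phi_\nu$ and on its fractional time derivatives (dominated convergence, Plancherel), after which every Fourier mode satisfies its scalar equation by construction. I expect the main obstacle to be exactly this step --- extracting from the multivariate Mittag--Leffler asymptotics the sharp $t$-behaviour, namely the $(1+t^{-\alpha_0})$ blow-up as $t\to0^+$ and the $\sum_{k=0}^{m}\gamma_k t^{\alpha_0-\alpha_k}$ prefactor, uniformly over the entire spectrum of $\mathcal{L}$, and the attendant justification of the termwise fractional time-differentiation of the operator series. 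Everything else is a routine adaptation of the Fourier-analytic scheme already used for \eqref{Heat-intro}, to which \eqref{multi-intro} reduces when $m=0$.
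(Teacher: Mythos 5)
Your proposal takes essentially the same route as the paper's proof: group Fourier transform to decouple \eqref{multi-intro} into scalar multi-term fractional ODEs, Laplace-transform inversion via the multivariate Mittag--Leffler function, and Plancherel together with the elementary bound $(1+\nu)(1+\nu t^{\alpha_0})^{-1}\leqslant 1+t^{-\alpha_0}$ (versus $\leqslant 1+\nu$) to produce the two Sobolev estimates. The ``known decay/boundedness estimate'' you leave unspecified is exactly the ingredient the paper uses, namely the bound $|E_{(\alpha_0-\alpha_1,\ldots,\alpha_0-\alpha_m,\alpha_0),\alpha_0-\alpha_k+1}(\ldots,-\nu t^{\alpha_0})|\leqslant C_{T,\alpha_0,\ldots,\alpha_m}(1+\nu t^{\alpha_0})^{-1}$ valid for $t\in(0,T]$ from \cite[Lemma 3.2]{multi-estimate}, so your argument is correct and coincides with the paper's.
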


\section{Preliminary Results}

In this section we collect definitions and results on non-local (in time) differential operators and compact Lie groups, which will be used throughout the whole paper. 

\subsection{Non-local (in time) differential operators}

We start by giving some definitions of several basic function spaces. For a fixed finite interval $[a,T]\subseteq\mathbb{R}$, we recall the following well-known function spaces on this interval:
\begin{align*}
	L^1(a,T)&=\left\{f:(a,T)\to\mathbb{C}\text{ measurable}\;:\;\big\|f\big\|_{L^1(a,T)}:=\int_a^T\big|f(t)\big|\,\mathrm{d}t<+\infty\right\}; \\
	AC[a,T]&=\left\{f:[a,T]\to\mathbb{C}\;:\;f\text{ absolutely continuous on }[a,T]\right\}; \\
	AC^n[a,T]&=\left\{f:[a,T]\to\mathbb{C}\;:\;f^{(n-1)}\text{ exists and is in }AC[a,T]\right\},\quad n\in\mathbb{N}.
\end{align*}
Now we recall the Riemann--Liouville fractional integral of order $\beta>0$ (\cite{kilbas,samko}), which is defined as follows \cite[Sections 2.3 and 2.4]{samko}:
\[
\prescript{RL}{a}I^{\beta}f(t)=\frac1{\Gamma(\beta)}\int_a^t (t-s)^{\beta-1}f(s)\,\mathrm{d}s,\qquad f\in L^1(a,T).
\]
In this paper, we use a non-local differential operator in time with memory kernel $\frac{t^{\alpha-1}}{\Gamma(\alpha)}$, which is commonly known as the Dzhrbashyan--Caputo fractional derivative, defined by
\begin{equation}\label{caputo-alternative-uso}
	\prescript{C}{a}D^{\beta}f(t)=\prescript{RL}{a}I^{n-\beta}f^{(n)}(t),\qquad f\in AC^n[a,T],\quad n:=\lfloor\beta\rfloor+1.
\end{equation}
The fundamental reason for considering this kernel is that it allows the considered equation in this paper to interpolate between the heat equation $(\alpha=1)$ and the wave equation $(\alpha=2)$. It is frequently used in applications since it symbolizes the memory of a long-time tail of the power order \cite{symbol}.

The above operator can be equivalently expressed as 
\begin{equation}\label{caputo-alternative}
	\prescript{C}{a}D^{\beta}f(t)=\prescript{RL}{a}D^{\beta}\left(f(t)-\sum_{k=0}^{n-1}\frac{f^{(k)}(a)}{k!}(t-a)^k\right),\qquad f\in AC^n[a,T],
\end{equation}
where $\prescript{RL}{a}D^{\beta}f(t)$ is the Riemann--Liouville fractional derivative of order $\beta\geqslant0$ given by
\begin{align*}
	\prescript{RL}{a}D^{\beta}f(t)&=D^{n}\prescript{RL}{a}I^{n-\beta}f(t) \\
	&=\frac1{\Gamma(n-\beta)}\left(\frac{\rm{d}}{\rm{d}t}\right)^{n}\int_a^t (t-s)^{n-\beta-1}f(s)\,\mathrm{d}s,\,\, f\in AC^n[a,T],\,\, n=\lfloor\beta\rfloor+1.
\end{align*}
Both definitions coincide for any function $f\in AC^n[a,T]$,  \cite[Theorem 3.1]{diethelm}, see also \cite[Theorem 2.2]{samko}. The only difference between the two definitions is the possibility of defining \eqref{caputo-alternative} on a larger function space than $AC^n[a,T]$, since it is possible to define the Riemann--Liouville derivative on such larger function spaces. A concrete example can be found in \cite{example}, where some functions are examined which have no first order derivative but have Riemann--Liouville fractional derivatives of all orders less than one.

\subsection{Compact Lie groups}
The majority of the notations and preliminary results important for the development of this investigation are recalled in this section. The references \cite{FR16} and \cite{livropseudo} provide a thorough exposition of these principles as well as demonstrations of all of the conclusions described here. For more classical books see e.g. \cite{compact1,compact2}. 

Let $G$ be a compact Lie group, and let the set of continuous irreducible unitary representations of $G$ be $\operatorname{Rep}(G)$. Every continuous irreducible unitary representation $\xi$ is finite dimensional since $G$ is compact, and it can be seen as a matrix-valued function $\xi:G \to \mathbb{C}^{d_\xi\times d_\xi}$, where $d_\xi = \dim \xi$. We say that $\xi \sim \psi$ if there exists an unitary matrix $A\in C^{d_\xi \times d_\xi}$ such that $A\xi(x) =\psi(x)A$, for all $x\in G$. The quotient of $\operatorname{Rep}(G)$ by this equivalence relation will be denoted by $\widehat{G}$, the unitary dual.

As usual, we denote by $L^{p}(G)$ $(1\leqslant p<+\infty)$ the space of $p$-integrable functions in $G$ with respect to the Haar measure (normalized) and essentially bounded for $p=+\infty$.

For $f \in L^1(G)$ the group Fourier transform of $f$ at $\xi \in \operatorname{Rep}(G)$ is
\begin{equation*}
	\widehat{f}(\xi)=\int_G f(x) \xi(x)^* \, \mathrm{d}x,
\end{equation*}
where $\mathrm{d}x$ is the normalized Haar measure on $G$. Precisely, the components of the matrix $\widehat{f}(\xi)$ are given by
$$
\widehat{f}(\xi)_{ij} = \int_G f(x) \overline{\xi(x)_{ji}} \mathrm{d} x,
$$
for every $1 \leqslant i,j \leqslant d_\xi$.
By the Peter-Weyl theorem, we have that 
\begin{equation}\label{ortho}
	\mathcal{B} := \left\{\sqrt{d_\xi} \, \xi_{ij} \,; \ \xi=(\xi_{ij})_{i,j=1}^{d_\xi}, [\xi] \in \widehat{G} \right\},
\end{equation}
is an orthonormal basis for $L^2(G)$, where we consider only one matrix unitary representation in each class of equivalence, and we may write
\begin{equation*}
	f(x)=\sum_{[\xi]\in \widehat{G}}d_\xi \operatorname{Tr}(\xi(x)\widehat{f}(\xi)).
\end{equation*}
Moreover, the Plancherel formula holds:
\begin{equation}
	\label{plancherel} \|f\|_{L^{2}(G)}=\left(\sum_{[\xi] \in \widehat{G}}  d_\xi \ 
	\|\widehat{f}(\xi)\|_{{\mathtt{HS}}}^{2}\right)^{\tfrac{1}{2}}=:
	\|\widehat{f}\|_{\ell^{2}(\widehat{G})},
\end{equation}
where 
\begin{equation*} \|\widehat{f}(\xi)\|_{{\mathtt{HS}}}^{2}=\operatorname{Tr}(\widehat{f}(\xi)\widehat{f}(\xi)^{*})=\sum_{i,j=1}^{d_\xi}  \bigr|\widehat{f}(\xi)_{ij}\bigr|^2.
\end{equation*}
Let $\mathcal{L}_G$ be the Laplace-Beltrami operator of $G$. For each $[\xi] \in \widehat{G}$, its matrix elements are eigenfunctions of $-\mathcal{L}_G$ corresponding to the same eigenvalue that we will denote by $\lambda_\xi^2$, where $\lambda_\xi^2 \geqslant 0$. Thus
\begin{equation}\label{symbol-laplace}
	-\mathcal{L}_G \xi_{ij}(x) = \lambda_\xi^2\xi_{ij}(x), \quad \textrm{for all } 1 \leqslant i,j \leqslant d_\xi.
\end{equation}
The  symbol of a continuous linear operator $P$ in $x\in G$ and $\xi \in \mbox{{Rep}}(G)$, $\xi=(\xi_{ij})_{i,j=1}^{d_\xi}$ is defined as
$$
\sigma_P(x,\xi) := \xi(x)^*(P\xi)(x) \in \mathbb{C}^{d_\xi \times d_\xi},
$$
where $(P\xi)(x)_{ij}:= (P\xi_{ij})(x)$, for all $1\leqslant i,j \leqslant d_\xi$, and we have
$$
Pf(x) = \sum_{[\xi] \in \widehat{G}} d_\xi \mbox{Tr} \left(\xi(x)\sigma_P(x,\xi)\widehat{f}(\xi)\right)
$$
for every $f \in C^\infty(G)$ and $x\in G$.

Notice that the last expression is independent of the choice of the representative. When $P: C^\infty(G) \to C^\infty(G)$ is a continuous linear left invariant operator, that is $P\pi_L(y)=\pi_L(y)P$, for all $y\in G$, we have that $\sigma_P$ is independent of $x\in G$ and
$$
\widehat{Pf}(\xi) = \sigma_P(\xi)\widehat{f}(\xi),
$$
for all $f \in C^\infty(G)$ and $[\xi] \in \widehat{G}$. For instance, the Laplace-Beltrami operator $\mathcal{L}_G$ is a left invariant operator and by \eqref{symbol-laplace} its symbol is $\sigma_{-\mathcal{L}_G}(\xi) = \lambda_\xi^2 \operatorname{Id}_{d_\xi \times d_\xi}$, for every $[\xi] \in \widehat{G}$. So, we have that
$$
\widehat{-\mathcal{L}_Gf}(\xi) = \lambda_\xi^2\widehat{f}(\xi).
$$

We denote by $\mathcal{M}(\widehat{G})$ the space consisting of all mappings
$$
F: \widehat{G} \rightarrow \bigcup_{[\xi] \in \widehat{G}} \mathcal{L}\left(\mathcal{H}_{\xi}\right) \subset \bigcup_{m=1}^{\infty} \mathbb{C}^{m \times m}
$$
satisfying $F([\xi]) \in \mathcal{L}\left(\mathcal{H}_{\xi}\right)$ for every $[\xi] \in \widehat{G}$. In matrix representations, we can view $F([\xi])$ as a matrix in $\mathbb{C}^{d_\xi \times d_\xi}$. In order to simplify the notation we will write $F(\xi)$ with a convention that $F \in \mathcal{M}(\widehat{G})$ if $F(\xi)=F(\eta)$ whenever $\xi \sim \eta$.

The space $\mathcal{S}^{\prime}(\widehat{G})$ of slowly increasing or tempered distributions on the unitary dual $\widehat{G}$ is defined as the space of all $H \in \mathcal{M}(\widehat{G})$ for which there exists some $k \in \mathbb{N}$ such that
$$
\sum_{[\xi] \in \widehat{G}} d_{\xi}\langle\xi\rangle^{-k}\|H(\xi)\|_{{\mathtt{HS}}}<+\infty,
$$
where $\jp{\xi} := \sqrt{1+\lambda_\xi^2}$.

The convergence in $\mathcal{S}^{\prime}(\widehat{G})$ is defined as follows. We will say that $H_{j} \in \mathcal{S}^{\prime}(\widehat{G})$ converges to $H \in \mathcal{S}^{\prime}(\widehat{G})$ in $\mathcal{S}^{\prime}(\widehat{G})$ as $j \rightarrow \infty$, if there exists some $k \in \mathbb{N}$ such that
$$
\sum_{[\xi] \in \widehat{G}} d_{\xi}\langle\xi\rangle^{-k}\left\|H_{j}(\xi)-H(\xi)\right\|_{{\mathtt{HS}}} \rightarrow 0
$$
as $j \rightarrow \infty$.

For $1 \leqslant p < +\infty$, we define the space $L^p(\widehat{G})$ as the space of all $H\in \mathcal{S}'(\widehat{G})$ such that 
$$
\|H\|_{L^{p}(\widehat{G})}:=\left(\sum_{[\xi] \in \widehat{G}}d_{\xi}^{p\left(\frac{2}{p}-\frac{1}{2}\right)}\|H(\xi)\|_{{\mathtt{HS}}}^{p}\right)^{1 / p}<+\infty.
$$
We refer to \cite{livropseudo} for the extensive analysis of this family of spaces. For $p=+\infty$, the space $L^\infty(\widehat{G})$ consists of all $H\in \mathcal{S}'(\widehat{G})$ such that 
$$
\|H\|_{L^{\infty}(\widehat{G})}:=\sup _{[\xi] \in \widehat{G}}d_{\xi}^{-1 / 2}\|H(\xi)\|_{{\mathtt{HS}}}<+\infty.
$$

The spaces $L^p(\widehat{G})$ are Banach spaces for all $1 \leqslant p \leqslant +\infty$ and for the special case $p=2$ we have that the space $L^2(\widehat{G})$ is a Hilbert space with the inner product 
$$
(E, F)_{L^{2}(\widehat{G})}:=\sum_{[\xi] \in \widehat{G}} d_{\xi} \operatorname{Tr}\left(E(\xi) F(\xi)^{*}\right).
$$
Moreover, the Fourier transform $f \mapsto \mathcal{F}_Gf := \widehat{f}$ defines a surjective isometry $L^2(G) \to L^2(\widehat{G})$ and the inverse Fourier transform is given by
$$
\left(\mathcal{F}_{G}^{-1} H\right)(x):=\sum_{[\xi] \in \widehat{G}} d_{\xi} \operatorname{Tr}(\xi(x) H(\xi))
$$
and we have
$$
\mathcal{F}_{G}^{-1} \circ \mathcal{F}_{G}=\operatorname{Id} \quad \text { and } \quad \mathcal{F}_{G} \circ \mathcal{F}_{G}^{-1}=\operatorname{Id}
$$
on $L^{2}(G)$ and $L^{2}(\widehat{G})$, respectively. Moreover, the Fourier transform $\mathcal{F}_{G}$ is unitary.

We also have that the Fourier transform $\mathcal{F}_{G}$ is a linear bounded operator from $L^{1}(G)$ to $L^{\infty}(\widehat{G})$ satisfying \cite[Prop. 10.3.42]{livropseudo}
\begin{equation}\label{l1-l2}
	\|\widehat{f}\|_{L^{\infty}(\widehat{G})} \leqslant\|f\|_{L^{1}(G)},
\end{equation}
and the inverse Fourier transform $\mathcal{F}_{G}^{-1}$ is a linear bounded operator from $L^{1}(\widehat{G})$ to $L^{\infty}(G)$ satisfying
$$
\left\|\mathcal{F}_{G}^{-1} H\right\|_{L^{\infty}(G)} \leqslant\|H\|_{L^{1}(\widehat{G})} .
$$

For $s\in\mathbb{R}$, we may characterize the Sobolev space $H^s(G)$ as
$$
H^{s}(G)=\left\{f \in \mathcal{D}^{\prime}(G):\langle\xi\rangle^{s} \widehat{f}(\xi) \in L^{2}(\widehat{G})\right\}.
$$

For $k \in \mathbb{N}$, the Fourier transform $\mathcal{F}_G$ is a continuous bijection from $H^{2k}(G)$ to the space
$$
\left\{F \in \mathcal{M}(\widehat{G}): \sum_{[\xi] \in \widehat{G}} d_{\xi}\langle\xi\rangle^{2 k}\|F(\xi)\|_{{\mathtt{HS}}}^{2}<+\infty\right\}.
$$

	\section{Heat type equations with non-local differential  operators}\label{heat-section}

In this section we study the following heat type equation:  

\begin{equation}\label{HeatTypeEquationG}
	\left\{ \begin{aligned}
		^{C}\partial_{t}^{\alpha}u(t,x)+\mathcal{L}u(t,x)&=0, \quad t>0,\,\, x\in G, \\
		u(t,x)|_{_{_{t=0}}}&=u_0(x),
	\end{aligned}
	\right.
\end{equation}
where $^{C}\partial_{t}^{\alpha}$ is the Dzhrbashyan-Caputo fractional derivative from \eqref{caputo-alternative-uso}, $G$ is a compact Lie group, $\mathcal{L}$ is a positive linear left invariant operator on $G$ (we always assume densely defined and maybe unbounded) and $0<\alpha\leqslant1$. The case $\alpha=1$ coincides with the classical time derivative and heat equation. Therefore, in the following proofs we just focus on the case of $0<\alpha<1$.  

Here we show that there exists a unique continuous solution of equation \eqref{HeatTypeEquationG} which satisfies the $L^p(G)-L^q(G)$ estimates for $1\leqslant p\leqslant 2\leqslant q<+\infty$. In fact, we will see that the $L^p(G)-L^q(G)$ properties can be reduced to the time asymptotics of its propagator in the noncommutative Lorentz space norm \cite[Def. 2.12]{RR2020}. For the latter fact, we just need to impose a condition over the behaviour of the heat-propagator which involves calculating the trace of the spectral projections of the operator $\mathcal{L}$. We provide some examples to show the nature (viability) of this condition. For details on spectral theory, see e.g. \cite{BorelFunctional}. In the second part, we give the time decay rate of the propagator.

For the next result and sections we need to recall the two-parametric Mittag-Leffler function
\begin{equation}\label{bimittag}
	E_{\alpha,\rho}(z)=\sum_{k=0}^{+\infty} \frac{z^k}{\Gamma(\alpha k+\rho)},\quad z,\rho\in\mathbb{C},\quad \Re(\alpha)>0,
\end{equation}
which is absolutely and locally uniformly convergent for the given parameters. For more details of this function and some other types, we recommend the expository book \cite{mittag}. 

We also need to remember the trace of the spectral projections of a positive linear left invariant operator $\mathcal{L}$ acting on a compact Lie group, which is denoted by $\tau$, and given by
\[
\tau(E_{(0,s)}(\mathcal{L}))=\sum_{\xi\in\widehat{G}}d_\xi \sum_{k=1,\ldots,d_\xi,\,\,s_{k,\xi}<s}1,
\]
where each $s_{k,\xi}$ is a joint eigenvalue of $\mathcal{L}$ with the eigenfunction $\xi_{jk}$ for $j=1,\ldots,d_{\xi}$. A proof of this can be found in Subsection 7.2 (exactly on page 44) of \cite{RR2020preprint}. We point out that the preprint \cite{RR2020preprint} has been included and cited here since it contains additional and useful information (e.g. the above result on the trace) which can not be found in the published version \cite{RR2020}.      
\begin{theorem}\label{Main-heat}
	Let $G$ be a compact Lie group, $0<\alpha\leqslant1$ and $1\leqslant p\leqslant 2\leqslant q<+\infty$. Let $\mathcal{L}$ be a positive linear left invariant operator on $G$ (maybe unbounded) such that 
	\begin{equation}\label{need}
		\sup_{t>0}\sup_{s>0}[\tau\big(E_{(0,s)}(\mathcal{L})\big)]^{\frac{1}{p}-\frac{1}{q}}E_\alpha(-t^{\alpha}s)<+\infty.   
	\end{equation}
	If $u_0\in L^p(G)$ then there exists a unique solution $u\in \mathcal{C}\big([0,+\infty); L^q(G)\big)$ for the Cauchy problem \eqref{HeatTypeEquationG} given explicitly by
	\[
	u(t,x)=E_\alpha(-t^{\alpha}\mathcal{L})u_0(x),\quad t>0,\,\,x\in G,
	\]
	where the propagator is defined as
	\[
	E_\alpha(-t^{\alpha}\mathcal{L}) = \sum_{k=0}^{+\infty} \frac{(-t^{\alpha}\mathcal{L})^k}{\Gamma(\alpha k+1)}.
	\]
	In particular, if for some $\lambda>0$ we have 
	\begin{equation}\label{asymtotic-trace}
		\tau\big(E_{(0,s)}(\mathcal{L})\big)\lesssim s^{\lambda},\quad s\to+\infty,
	\end{equation}
	then \eqref{need} is satisfied for any $1<p\leqslant 2\leqslant q<+\infty$ such that $\frac{1}{\lambda}>\frac{1}{p}-\frac{1}{q}$, and one has the following time decay rate for the solution of equation \eqref{HeatTypeEquationG}: 
	\[
	\|u(t,\cdot)\|_{L^q(G)}\leqslant C_{\alpha,\lambda,p,q}t^{-\alpha\lambda\left(\frac{1}{p}-\frac{1}{q}\right)}\|u_0\|_{L^p(G)},    
	\]
	with the constant $C_{\alpha,\lambda,p,q}$ independent of $u_0$ and $t>0.$
\end{theorem}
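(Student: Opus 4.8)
The plan is to diagonalise the Cauchy problem \eqref{HeatTypeEquationG} by means of the non-commutative Fourier transform of $G$, to solve the resulting scalar fractional Cauchy problems explicitly, and then to extract the $L^p(G)$--$L^q(G)$ mapping property of the propagator from the Fourier multiplier theorem on compact Lie groups of \cite{RR2020} (see also \cite{RR2020preprint}). Since $\mathcal{L}$ is a positive, densely defined, left invariant operator, it is diagonalised by Peter--Weyl: for every $[\xi]\in\widehat{G}$ the symbol $\sigma_{\mathcal{L}}(\xi)$ is a positive semi-definite $d_\xi\times d_\xi$ matrix whose eigenvalues are the joint eigenvalues $s_{1,\xi},\dots,s_{d_\xi,\xi}\geqslant 0$, and $\mathrm{spec}(\mathcal{L})\subseteq[0,+\infty)$. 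Applying $\mathcal{F}_G$ to \eqref{HeatTypeEquationG} and using $\widehat{\mathcal{L}u}(t,\xi)=\sigma_{\mathcal{L}}(\xi)\widehat{u}(t,\xi)$, the problem decouples, after diagonalising $\sigma_{\mathcal{L}}(\xi)$, into the scalar problems ${}^{C}\partial_{t}^{\alpha}y+s_{k,\xi}y=0$, $y(0)=y_0$. For each $\mu\geqslant0$ the unique solution in $AC^1[0,T]$ (for every $T>0$) of ${}^{C}\partial_{t}^{\alpha}y+\mu y=0$, $y(0)=y_0$, is $y(t)=E_\alpha(-\mu t^\alpha)y_0$, as one checks by inserting the series \eqref{bimittag} and differentiating term by term, using that ${}^{C}\partial_{t}^{\alpha}$ annihilates constants while ${}^{C}\partial_{t}^{\alpha}t^{\alpha k}=\Gamma(\alpha k+1)\Gamma(\alpha k+1-\alpha)^{-1}t^{\alpha(k-1)}$ for $k\geqslant1$ (cf. \cite{kilbas}). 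Since $r\mapsto E_\alpha(-t^\alpha r)$ is a bounded Borel function on $[0,+\infty)$, the bounded functional calculus of $\mathcal{L}$ (\cite{BorelFunctional}) gives a well-defined operator $E_\alpha(-t^\alpha\mathcal{L})$, and $u(t,\cdot):=E_\alpha(-t^\alpha\mathcal{L})u_0$ solves \eqref{HeatTypeEquationG} on the Fourier side block by block; differentiation under the spectral integral shows that it solves the equation itself, and uniqueness is inherited from the scalar problems together with Plancherel.

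For the $L^p$--$L^q$ estimate I would argue as follows. For fixed $t>0$, $E_\alpha(-t^\alpha\mathcal{L})$ is a left invariant Fourier multiplier with symbol $E_\alpha(-t^\alpha\sigma_{\mathcal{L}}(\xi))$, whose eigenvalues $E_\alpha(-t^\alpha s_{k,\xi})$ lie in $(0,1]$. Because $0<\alpha\leqslant1$, the map $r\mapsto E_\alpha(-t^\alpha r)$ is positive, continuous and strictly decreasing on $[0,+\infty)$ (complete monotonicity of $E_\alpha(-\,\cdot\,)$), with value $1$ at $r=0$; hence $E_\alpha(-t^\alpha s_{k,\xi})>\nu$ if and only if $s_{k,\xi}<s_*(\nu)$, where $s_*(\nu)$ solves $E_\alpha(-t^\alpha s_*(\nu))=\nu$, so the distribution function of the symbol with respect to the trace $\tau$ is $\nu\mapsto\tau(E_{(0,s_*(\nu))}(\mathcal{L}))$. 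Substituting $\nu=E_\alpha(-t^\alpha s)$, the noncommutative Lorentz norm (\cite[Def.~2.12]{RR2020}) of the symbol, with exponent $r$ given by $\tfrac1r=\tfrac1p-\tfrac1q$, equals $\sup_{s>0}[\tau(E_{(0,s)}(\mathcal{L}))]^{1/p-1/q}E_\alpha(-t^\alpha s)$. The $L^p$--$L^q$ Fourier multiplier theorem of \cite{RR2020} (for $1<p\leqslant2\leqslant q<+\infty$; the limiting case $p=1$ being handled by a direct Hausdorff--Young argument based on \eqref{l1-l2}) then bounds $\|E_\alpha(-t^\alpha\mathcal{L})\|_{L^p(G)\to L^q(G)}$ by a constant times that supremum, which is finite by hypothesis \eqref{need}; hence $u(t,\cdot)\in L^q(G)$ with $\|u(t,\cdot)\|_{L^q(G)}\lesssim\|u_0\|_{L^p(G)}$. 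Continuity of $t\mapsto u(t,\cdot)$ in $L^q(G)$ on $(0,+\infty)$ follows by applying the same estimate to the difference symbol $E_\alpha(-t^\alpha\sigma_{\mathcal{L}})-E_\alpha(-t_0^\alpha\sigma_{\mathcal{L}})$ and using dominated convergence for the resulting supremum, while the behaviour at $t=0$ follows from the strong continuity of the propagator (conveniently seen via the subordination formula $E_\alpha(-t^\alpha\mathcal{L})=\int_0^{+\infty}e^{-st^\alpha\mathcal{L}}\phi_\alpha(s)\,\mathrm{d}s$ with $\phi_\alpha\geqslant0$ a probability density).

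For the time decay rate, I would substitute the polynomial bound \eqref{asymtotic-trace}, i.e. $\tau(E_{(0,s)}(\mathcal{L}))\lesssim s^\lambda$ (which, being non-decreasing in $s$ and vanishing below the least positive eigenvalue, then holds for all $s>0$), into the supremum above and change variables $y=t^\alpha s$, obtaining
\[
\sup_{s>0}[\tau(E_{(0,s)}(\mathcal{L}))]^{\frac1p-\frac1q}E_\alpha(-t^\alpha s)\;\lesssim\;t^{-\alpha\lambda\left(\frac1p-\frac1q\right)}\sup_{y>0}y^{\,\lambda\left(\frac1p-\frac1q\right)}E_\alpha(-y);
\]
the remaining supremum is finite precisely when $\lambda\big(\tfrac1p-\tfrac1q\big)<1$, i.e. $\tfrac1\lambda>\tfrac1p-\tfrac1q$, because $E_\alpha(-y)$ is continuous, bounded near $0$, and satisfies $E_\alpha(-y)=O(y^{-1})$ as $y\to+\infty$ (see \cite{mittag}). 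Combined with the previous paragraph this yields the stated bound $\|u(t,\cdot)\|_{L^q(G)}\leqslant C_{\alpha,\lambda,p,q}\,t^{-\alpha\lambda(\frac1p-\frac1q)}\|u_0\|_{L^p(G)}$ with $C_{\alpha,\lambda,p,q}$ depending only on $\alpha,\lambda,p,q$.

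The main obstacle is the passage carried out in the second paragraph: one must correctly set up the bounded Borel functional calculus for the possibly unbounded operator $\mathcal{L}$ and, above all, identify the noncommutative $L^p$--$L^q$ (Lorentz-type) multiplier norm of the propagator's symbol with the purely spectral quantity $\sup_{s>0}[\tau(E_{(0,s)}(\mathcal{L}))]^{1/p-1/q}E_\alpha(-t^\alpha s)$, which hinges on the monotonicity of $E_\alpha(-\,\cdot\,)$ and on the explicit expression for $\tau(E_{(0,s)}(\mathcal{L}))$ recalled before the statement. A secondary technical point is the justification of the term-by-term time-differentiation of the Mittag--Leffler series (equivalently, differentiation under the spectral integral) needed to confirm that the formula genuinely solves \eqref{HeatTypeEquationG} and not merely its Fourier transform.
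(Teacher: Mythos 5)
Your proposal is correct and follows essentially the same route as the paper: Fourier/Peter--Weyl diagonalization leading to the Mittag-Leffler propagator $E_\alpha(-t^{\alpha}\mathcal{L})$, then the $L^p(G)$--$L^q(G)$ multiplier results of \cite{RR2020} reducing the operator norm to the spectral quantity $\sup_{s>0}[\tau\big(E_{(0,s)}(\mathcal{L})\big)]^{\frac{1}{p}-\frac{1}{q}}E_\alpha(-t^{\alpha}s)$, and finally the decay of $E_\alpha(-\,\cdot\,)$ to extract the rate $t^{-\alpha\lambda\left(\frac{1}{p}-\frac{1}{q}\right)}$. The only cosmetic differences are that you verify the scalar Caputo problems by term-by-term differentiation rather than the Laplace transform, and you obtain the decay via the scaling $y=t^{\alpha}s$ together with $E_\alpha(-y)=O(y^{-1})$, whereas the paper maximizes $s^{\lambda/r}/(\Gamma(1+\alpha)+t^{\alpha}s)$ explicitly --- the same underlying estimate.
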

\begin{proof}
	Let $[\xi]\in \widehat{G}$, and denote by $\widehat{u}(t,\xi)$ the group Fourier transform of $u$ with respect to the variable $x$. Therefore, from equation \eqref{HeatTypeEquationG}, we obtain
	\[
	\left\{ \begin{aligned}
		^{C}\partial_{t}^{\alpha}\widehat{u}(t,\xi)+\sigma_{\mathcal{L}}(\xi)\widehat{u}(t,\xi)&=0, \quad t>0, \\
		\widehat{u}(t,\xi)|_{_{_{t=0}}}&=\widehat{u_0}(\xi).
	\end{aligned}
	\right.
	\]
	Note that for the left invariance of operator $\mathcal{L}$, we have that $\sigma_{\mathcal{L}}$ is independent of $x\in G$ and $\widehat{\mathcal{L}f}(\xi) = \sigma_{\mathcal{L}}(\xi)\widehat{f}(\xi),$ for all $f\in C^\infty(G)$ and $[\xi] \in \widehat{G}$, see e.g. \cite{livropseudo}. Therefore, the matrix $\sigma_\mathcal{L}(\xi)$ can be written as
	\[
	\sigma_\mathcal{L}(\xi)=
	\begin{pmatrix}
		\mu_{1,\xi} & 0 &\ldots & 0\\
		0 & \mu_{2,\xi} & \ldots & 0 \\
		\vdots & \vdots &  & 0 \\
		0 & \ldots &  & \mu_{d_\xi,\xi}
	\end{pmatrix}
	\]
	where all the $\mu_i's$ are non-negative since the operator is positive.    
	
	Here we get a system of scalar ODEs, which depend on the dimension $d_\xi$ of the representation $[\xi]$. In fact, we have
	\[
	\left\{ \begin{aligned}
		^{C}\partial_{t}^{\alpha}\widehat{u}(t,\xi)_{ij}+\mu_{i,\xi}\widehat{u}(t,\xi)_{ij}&=0,\quad t>0, \\
		\widehat{u}(t,\xi)_{ij}|_{_{_{t=0}}}&=\widehat{u_0}(\xi)_{ij},
	\end{aligned}
	\right.
	\]
	for any $i,j\in \{1,\ldots,d_\xi\}$ with the non-negative eigenvalues $\mu_{i,\xi}$. Let us now apply the Laplace transform in the time-variable, and hence
	\[
	\left\{ \begin{aligned}
		s^{\alpha}\widetilde{\widehat{u}}(s,\xi)_{ij}-s^{\alpha-1}\widehat{u_0}(\xi)_{ij}+\mu_{i,\xi} \widetilde{\widehat{u}}(s,\xi)_{ij}&=0, \quad s>0, \\
		\widehat{u}(t,\xi)_{ij}|_{_{_{t=0}}}&=\widehat{u_0}(\xi)_{ij}.
	\end{aligned}
	\right.
	\]
	Thus, it follows that
	\[
	\widetilde{\widehat{u}}(s,\xi)_{ij}=\frac{s^{\alpha-1}}{s^{\alpha}+\mu_{i,\xi}}\widehat{u_0}(\xi)_{ij}, \quad s>0, 
	\]
	and by the application of the inverse Laplace transform (see e.g.  \cite[Theorem 2.1]{new-mittag-add}) we arrive at
	\[
	\widehat{u}(t,\xi)_{ij}=E_{\alpha}(-\mu_{i,\xi} t^{\alpha})\widehat{u_0}(\xi)_{ij}.
	\]
	Now we apply the inverse Fourier transform in $x$ and get the explicit solution of the considered problem as follows: 
	\begin{align*}
		u(t,x)&=\sum_{[\xi]\in\widehat{G}}d_{\xi}\sum_{i,j=1}^{d_{\xi}}\widehat{u}(t,\xi)_{ij}\xi(x)_{ji}=\sum_{[\xi]\in\widehat{G}}d_{\xi}\sum_{i,j=1}^{d_{\xi}}E_{\alpha}(-\mu_{i,\xi} t^{\alpha})\widehat{u_0}(\xi)_{ij}\xi(x)_{ji}\\
		&=\sum_{k=0}^{+\infty}\frac{t^{\alpha k}}{\Gamma(\alpha k+1)}\sum_{[\xi]\in\widehat{G}}d_{\xi}\sum_{i,j=1}^{d_{\xi}}(-\mu_{i,\xi})^{k}\widehat{u_0}(\xi)_{ij}\xi(x)_{ji} \\
		&=\sum_{k=0}^{+\infty}\frac{(-t^{\alpha})^ k}{\Gamma(\alpha k+1)}\sum_{[\xi]\in\widehat{G}}d_{\xi}\sum_{i,j=1}^{d_{\xi}}\widehat{\mathcal{L}^{k}u_0}(\xi)_{ij}\xi(x)_{ji} \\
		&=\sum_{k=0}^{+\infty}\frac{(-t^{\alpha})^ k}{\Gamma(\alpha k+1)}\mathcal{L}^{k}u_0(x)=E_\alpha(-t^\alpha \mathcal{L})u_0(x). 
	\end{align*}
	Thus, by \eqref{need} and \cite[Corollary 6.2]{RR2020} we get that 
	\[
	\|E_\alpha(-t^{\alpha}\mathcal{L})\|_{L^{p}(G)\to L^q(G)}<+\infty,
	\]
	which completes the first part of the proof. Notice that our left invariant operator $\mathcal{L}$ in $G$ is a Fourier multiplier on $G$, see \cite[Remarks 2.17 and 5.8]{RR2020}. 
	
	On the other hand, by \cite[Theorem 5.1]{RR2020} we have
	\begin{equation}\label{previous}
		\|u(t,\cdot)\|_{L^q(G)}=\|E_\alpha(-t^{\alpha}\mathcal{L})u_0(\cdot)\|_{L^q(G)}\lesssim \|E_\alpha(-t^\alpha \mathcal{L})\|_{L^{r,\infty}(VN_R(G))}\|u_0\|_{L^p(G)},
	\end{equation}
	where the above Lorentzian norm is given by \cite[Theorem 6.1]{RR2020}:
	\[
	\|E_\alpha(-t^\alpha \mathcal{L})\|_{L^{r,\infty}(VN_R(G))}=\sup_{s>0}[\tau\big(E_{(0,s)}(\mathcal{L})\big)]^{\frac{1}{r}}E_\alpha(-t^{\alpha}s),\quad \frac{1}{r}=\frac{1}{p}
	-\frac{1}{q}, 
	\]
	where the group von Neumann algebra $VN_R(G)$ is generated by all the right actions of $G$ on $L^2(G)$ ($\pi_R(g)f(x)=f(xg)$ with $g\in G$), which means that $VN_R(G)=\{\pi_R(G)\}^{!!}_{g\in G}$, where $!!$ is the bicommutant of the self-adjoint subalgrabras $\{\pi_R(g)\}_{g\in G}\subset \mathcal{L}(L^2(G))$. The latter result is a consequence of the fact \cite{von}: $VN_R(G)^{!}=VN_L(G)$ and $VN_L(G)^{!}=VN_R(G)$, where the symbol $!$ represents the commutant of the group von Neumann algebra. For the above result we need the operator $\mathcal{L}$ to be affiliated with the semifinite von Neumann algebra $VN_R(G)$, which is provided by the left invariance \cite[Remark 2.17]{RR2020}. We also used that $E_\alpha(-t)$, $t\geqslant0$, is completely monotonic \cite{Pollard} such that $E_\alpha(0)=1$ and $\displaystyle\lim_{t\to+\infty}E_\alpha(-t)=0$ by the uniform estimate given in \cite[Theorem 4]{Mittag-bounded}. 
	
	So, by using the hypothesis \eqref{asymtotic-trace} and again  \cite[Theorem 4]{Mittag-bounded} we get
	\begin{equation}\label{sup-propa}
		\|E_\alpha(-t^\alpha \mathcal{L})\|_{L^{r,\infty}(VN_R(G))}\lesssim \sup_{s>0}s^{\frac{\lambda}{r}}\frac{1}{1+\frac{t^\alpha s}{\Gamma(1+\alpha)}}.    
	\end{equation}
	Let us now see that the above supremum is attained at $s=\frac{\lambda \Gamma(1+\alpha)}{r\left(1-\frac{\lambda}{r}\right)}t^{-\alpha}.$ In fact, take 
	\[
	g(s)=\frac{s^{\frac{\lambda}{r}}}{\Gamma(1+\alpha)+t^\alpha s},\quad s>0.
	\]
	We calculate its derivative 
	\[
	g^\prime(s)=\frac{s^{\lambda/r}\big(\frac{\lambda}{r}\Gamma(1+\alpha)s^{-1}+\frac{\lambda}{r}t^{\alpha}-t^\alpha\big)}{(\Gamma(1+\alpha)+t^\alpha s)^2}.
	\]
	So, the only zero for $s>0$ is at $s^*=\frac{\lambda \Gamma(1+\alpha)}{r\left(1-\frac{\lambda}{r}\right)}t^{-\alpha}$, which is conditioned to $\frac{1}{\lambda}>\frac{1}{p}-\frac{1}{q}$ (this provides the positiveness of the point $s$). It can be also inferred that the function $g^\prime(s)$ changes its sign from positive to negative at the point $s^*$. Therefore, $s^*$ is a point of maximum of the function $g(s)$. By \eqref{sup-propa}, we have
	\begin{equation}\label{asymtotic-heat}
		\|E_\alpha(-t^\alpha \mathcal{L})\|_{L^{r,\infty}(VN_R(G))}\lesssim \sup_{s>0}\frac{s^{\frac{\lambda}{r}}}{\Gamma(1+\alpha)+t^\alpha s}\leqslant C_{\alpha,\lambda,p,q}t^{-\alpha \lambda/r},
	\end{equation}
	and then the result follows immediately by \eqref{previous}.
	
\end{proof}
\begin{remark}
	Notice that the Mittag-Leffler function of negative argument $E_\alpha(-t^{\alpha}x)$ $(t,x>0)$ is completely monotonic for all $0<\alpha\leqslant1$ \cite{mittagalpha,Pollard}. We also have that our linear closed operator $\mathcal{L}$ in a Hilbert space $\mathcal{H}$ (we have this from the left invariant property on $G$, see \cite[Remark 2.17]{RR2020} and \cite[Definition 2.1]{RR2020}), is a sectorial operator of angle $0$, or what is the same a positive sectorial operator \cite[Chapter 2]{functionalcalculus}. So, by using the Borel functional calculus we can give sense to the propagator $E_\alpha(-t^{\alpha}\mathcal{L}).$ In Theorem \ref{Main-heat} we found the solution by using the Fourier analysis on the group. Nevertheless, in \cite[Chapter 3]{thesis}, we can see that the global mild solution (\cite[Def. 3.1]{thesis}) of equation \eqref{HeatTypeEquationG} (found in a Banach space) is expressed in a different form. In fact, we can represent the propagator as \cite[Theorem 2.41]{thesis} (see also \cite[Section 3]{section3})
	\[
	E_\alpha(-t^{\alpha}\mathcal{L})=\frac{1}{2\pi i}\int_{H}e^{\gamma t}\gamma^{\alpha-1}(\gamma^{\alpha}+\mathcal{L})^{-1}d\gamma,\quad t\geqslant0,\quad 0<\alpha<1,
	\]
	where $H\subset\rho(-\mathcal{L})$ and $H$ is the Hankel's path of \cite[Formula (2.5)]{thesis}. Several properties of this operator can be found in \cite[Chapter 2]{thesis}.
\end{remark}

\begin{remark}
	Notice that taking the limit as $\alpha\to1$ in \eqref{asymtotic-heat} (in an informal way), we can see that the result in Theorem \ref{Main-heat} coincides with \cite[Corollary 7.1]{RR2020} up to some positive constant, which is what we expect for the classical propagator of the heat equation in the case of compact Lie groups. We also point out that the order $\alpha$ of the singular operator $\prescript{C}{}\partial_t^\alpha$ in equation \eqref{HeatTypeEquationG} is transferred to the time decay rate of the solution.    
\end{remark}

\subsection{Examples}\label{examples} Below we show several examples where the trace of the spectral projections is already known.   

\subsubsection*{Example 1} Let us consider the following heat type equation: 
\begin{align*}
	\left\{\begin{aligned}
		^{C}\partial_{t}^{\alpha}u(t,x)-\Delta_{sub}u(t,x)&=0, \quad t>0,\,\, x\in G, \,\, 0<\alpha\leqslant 1, \\
		u(t,x)|_{_{_{t=0}}}&=u_0(x),\quad u_0\in L^p(G),\quad 1<p\leqslant 2,
	\end{aligned}
	\right.
\end{align*}
where $\Delta_{sub}$ is the sub-Laplacian on a compact Lie group. By \cite{[35]}, it follows that the trace of the spectral projections $E_{(0,s)}(-\Delta_{sub})$ has the following asymptotic behavior:
\[
\tau\big(E_{(0,s)}(-\Delta_{sub})\big)\lesssim s^{Q/2},\quad s\to+\infty,
\]
where $Q$ is the Hausdorff dimension of $G$ with respect to the control distance generated by the sub-Laplacian. We note that if $\Delta_{sub}=\Delta_G$ is the Laplacian on $G$, then $Q=n$ is the topological dimension of $G.$ So, by Theorem \ref{Main-heat} we have the existence, uniqueness, the form, and the asymptotic behavior for the solution $u(t,x)$ as follows:
\[
\|u(t,\cdot)\|_{L^q(G)}\leqslant C_{\alpha,Q,p,q}t^{-\alpha Q/2\left(\frac{1}{p}-\frac{1}{q}\right)}\|u_0\|_{L^p(G)},\quad 2\leqslant q<+\infty,\quad \frac{2}{Q}>\frac{1}{p}-\frac{1}{q}. 
\]
In particular if we consider the case $\alpha=1/2$ we get the following integro-differential equation:
\begin{align*}
	\begin{aligned}
		\frac{1}{\Gamma(1/2)}\int_0^t (t-r)^{-1/2}u_r(r,x){\rm d}r-\Delta_{sub}u(t,x)&=0, \quad t>0,\,\, x\in G, \,\, 0<\alpha\leqslant 1, \\
		u(t,x)|_{_{_{t=0}}}&=u_0(x),\quad u_0\in L^p(G),\quad 1<p\leqslant 2,
	\end{aligned}
\end{align*}
whose solution can be given by $u(t,x)=E_{1/2}(-t^{1/2}\mathcal{L})u_0(x).$

\subsubsection*{Example 2} Let $G=\mathbb{T}^n$, $n \in \mathbb{N}.$ Consider the following heat type equation:
\begin{align}\label{ExampleTn}
	\begin{aligned}
		^{C}\partial_{t}^{\alpha}u(t,x)-\Delta u(t,x)&=0, \quad t>0,\,\, x\in \mathbb{T}^n, \\
		u(t,x)|_{_{_{t=0}}}&=u_0(x),\quad u_0\in L^p(\mathbb{T}^n),\quad 1<p\leqslant 2,
	\end{aligned}
\end{align}
where $\Delta$ is the Laplacian operator on $\mathbb{T}^n$. Since $\mathbb{T}^n$ is an abelian compact group, all its continuous unitary irreducible representations are one-dimensional and we can identify $\widehat{\mathbb{T}^n} \simeq \mathbb{Z}^n$ (see \cite{livropseudo} for a detailed approach of the Fourier theory on $\mathbb{T}^n$). Here, we have that $\sigma_{\Delta}(m)=|m|^2=\sum_{j=1}^n |m_j|^2$, for all $m\in\mathbb{Z}^n$. From Theorem \ref{Main-heat}, definition of the Lorentzian space and \cite[Prop. 2.9]{RR2020} we have that the solution of equation \eqref{ExampleTn} satisfies that
\[
\|u(t,\cdot) \|_{L^q(\mathbb{T}^n)} \leqslant \sup_{s>0}s\left(\sum_{\xi\in\mathbb{Z}^n:|E_\alpha(-t^\alpha |\xi|^2)|\geqslant s}1\right)^{\frac{1}{p}-\frac{1}{q}}\|u_0\|_{L^p(\mathbb{T}^n)}.
\]
Since, for each $t>0$, we have that $\sigma_{E_\alpha(-t^\alpha \Delta)}(m)=E_\alpha(-t^\alpha |m|^2),$ for all $m\in\mathbb{Z}^n$. We also have that  $\sigma_{E_\alpha(-t^\alpha \Delta)}(m) \to 0$, when $|m| \to \infty$ due to the uniform estimate \cite[P. 35]{page 35}, which provides that the above sum is finite and the existence of the supremum.

\section{Wave type equations with non-local differential operators}\label{wave-section}

We first recall a Sobolev space, which will be used in the results of this section. Thus, for $\beta\in\mathbb{R}$ and $1<p<+\infty$, the Sobolev space $\mathcal{H}_{\mathcal{L}}^{\beta,p}(G)$ is defined by
\[
\mathcal{H}_{\mathcal{L}}^{\beta,p}(G)=\big\{f:\,\, (I+\mathcal{L})^{\beta/2}f\in L^p(G)\big\}
\]
endowed with the norm
\[
\|f\|_{\mathcal{H}_{\mathcal{L}}^{\beta,p}(G)}=\|(I+\mathcal{L})^{\beta/2}f\|_{L^p(G)}.
\]
For $p=2$ we just use the standard notation $\mathcal{H}_{\mathcal{L}}^{\beta,2}(G)=\mathcal{H}_{\mathcal{L}}^{\beta}(G).$

\medskip Here we investigate the solution of the following equation, which interpolates between wave (without being wave,  $\alpha<2$) and heat types:  
\begin{equation}\label{WaveTypeEquationG}
	\left\{ \begin{aligned}
		^{C}\partial_{t}^{\alpha}u(t,x)+\mathcal{L}u(t,x)&=0, \quad t>0,\,\, x\in G, \\
		u(t,x)|_{_{_{t=0}}}&=u_0(x), \\
		\partial_t u(t,x)|_{_{_{t=0}}}&=u_1(x),
	\end{aligned}
	\right.
\end{equation}
where $\mathcal{L}$ is a positive linear left invariant operator (we always assume $\mathcal{L}: C^\infty(G) \to C^\infty(G)$ to be continuous), $1<\alpha<2$ and $u_0,u_1$ in some suitable Sobolev spaces. In the statements of the section we avoid the case $\alpha=1$ since it is already known. At this time, we are not able to use the same ideas of Section \ref{heat-section} on $L^p(G)-L^q(G)$ estimates $(1\leqslant p\leqslant 2\leqslant q<+\infty)$ since the propagators of the solution of equation \eqref{WaveTypeEquationG} have a different behaviour for the considered range of $\alpha$. In fact, we are loosing the complete monotonicity of the propagators, which is fundamental for that argument. Nevertheless, we can use the Fourier analysis of the group to prove existence of a solution on a Sobolev space in $L^2(G)$. 

\medskip Notice also that by using the Plancherel formula we can get for $\beta\in\mathbb{R}$ that  
\begin{align*}
	\|(I+\mathcal{L})^{\beta/2}u(t,\cdot)\|_{L^2(G)}^2 &=\sum_{[\xi]\in\widehat{G}}d_\xi \|\sigma_{(I+\mathcal{L})^{\beta/2}}(\xi)\widehat{u}(t,\xi)\|_{{\mathtt{HS}}}^2 \\
	&=\sum_{[\xi]\in\widehat{G}}d_\xi \sum_{i,j=1}^{d_\xi}(1+\mu_{i,\xi})^{\beta} |\widehat{u}(t,\xi)_{ij}|^2.
\end{align*}
Below we use the Borel functional calculus associated with the positive linear left invariant operator $\mathcal{L}$ and the two parametric Mittag-Leffler function $E_{\alpha,2}(-t^{\alpha}s)$ and $E_{\alpha}(-t^{\alpha}s)$ for $1<\alpha<2$ and $t,s\geqslant 0.$ The latter type of functions are holomorphic on the whole complex plane (entire function) \cite[Chapter 4]{mittag} and bounded for any $t,s\geqslant 0$ \cite[P. 35]{page 35}. 

We will also use the following propagator:
\[
E_{\alpha,2}(-t^{\alpha}\mathcal{L}) = \sum_{k=0}^{+\infty} \frac{(-t^{\alpha}\mathcal{L})^k}{\Gamma(\alpha k+2)},\quad t\geqslant0,\,\,1<\alpha<2.
\]

\begin{theorem}\label{Main-wave-a}
	Let $G$ be a compact Lie group, $1<\alpha<2$ and $\beta\in\mathbb{R}$. Let $\mathcal{L}$ be a positive linear left invariant operator on $G$. 
	\begin{enumerate}
		\item If $(u_0,u_1)\in \mathcal{H}_{\mathcal{L}}^{\beta}(G)\times \mathcal{H}_{\mathcal{L}}^{\beta}(G)$ then there exists a unique solution $u(t,\cdot)\in \mathcal{H}^{\beta+2}_{\mathcal{L}}(G)$ for any $t\in(0,+\infty)$ for the Cauchy problem \eqref{WaveTypeEquationG} given explicitly by
		\begin{equation}\label{solution-wave}
			u(t,x)=E_\alpha(-t^{\alpha}\mathcal{L})u_0(x)+tE_{\alpha,2}(-t^{\alpha}\mathcal{L})u_1(x),\quad x\in G,
		\end{equation}
		and we have 
\[
			\|u(t,\cdot)\|_{\mathcal{H}^{\beta+2}_{\mathcal{L}}(G)}\lesssim (1+t^{-\alpha})\|u_0\|_{\mathcal{H}_{\mathcal{L}}^{\beta}(G)}+t(1+t^{-\alpha})\|u_1\|_{\mathcal{H}_{\mathcal{L}}^{\beta}(G)}.
\]
		\item If $(u_0,u_1)\in \mathcal{H}^{\beta}_{\mathcal{L}}(G)\times \mathcal{H}^{\beta+2}_{\mathcal{L}}(G)$ then there exists a unique solution $u(t,\cdot)\in \mathcal{H}^{\beta+2}_{\mathcal{L}}(G)$ for any $t\in(0,+\infty)$ for the Cauchy problem \eqref{WaveTypeEquationG} given explicitly by \eqref{solution-wave}, and we have
\[
\|u(t,\cdot)\|_{\mathcal{H}^{\beta+2}_{\mathcal{L}}(G)}\lesssim (1+t^{-\alpha})\|u_0\|_{\mathcal{H}_{\mathcal{L}}^{\beta}(G)}+t\|u_1\|_{\mathcal{H}_{\mathcal{L}}^{\beta+2}(G)}.
			\]
	\item If $(u_0,u_1)\in \mathcal{H}^{\beta}_{\mathcal{L}}(G)\times \mathcal{H}_{\mathcal{L}}^{\beta+\frac{2(\alpha-1)}{\alpha}}(G)$ then there exists a unique solution $u(t,\cdot)\in \mathcal{H}^{\beta+2}_{\mathcal{L}}(G)$ for any $t\in(0,+\infty)$ for the Cauchy problem \eqref{WaveTypeEquationG} given explicitly by \eqref{solution-wave}, and we have
\[
\|u(t,\cdot)\|_{\mathcal{H}^{\beta+2}_{\mathcal{L}}(G)}\lesssim (1+t^{-\alpha})\|u_0\|_{\mathcal{H}_{\mathcal{L}}^{\beta}(G)}+t\|u_1\|_{\mathcal{H}_{\mathcal{L}}^{\beta}(G)}+\|u_1\|_{\mathcal{H}_{\mathcal{L}}^{\beta+\frac{2(\alpha-1)}{\alpha}}(G)}.
			\]
			
			\item If $(u_0,u_1)\in \mathcal{H}^{\beta+2}_{\mathcal{L}}(G)\times \mathcal{H}^{\beta}_{\mathcal{L}}(G)$ then there exists a unique solution $u(t,\cdot)\in \mathcal{H}^{\beta+2}_{\mathcal{L}}(G)$ for any $t\in(0,+\infty)$ for the Cauchy problem \eqref{WaveTypeEquationG} given explicitly by \eqref{solution-wave}, and we have
\[
\|u(t,\cdot)\|_{\mathcal{H}^{\beta+2}_{\mathcal{L}}(G)}\lesssim \|u_0\|_{\mathcal{H}_{\mathcal{L}}^{\beta+2}(G)}+t(1+t^{-\alpha})\|u_1\|_{\mathcal{H}_{\mathcal{L}}^{\beta}(G)}.
			\]
\item If $(u_0,u_1)\in \mathcal{H}^{\beta+2}_{\mathcal{L}}(G)\times \mathcal{H}^{\beta+2}_{\mathcal{L}}(G)$ then there exists a unique solution $u(t,\cdot)\in \mathcal{H}^{\beta+2}_{\mathcal{L}}(G)$ for any $t\in(0,+\infty)$ for the Cauchy problem \eqref{WaveTypeEquationG} given explicitly by \eqref{solution-wave}, and we have
\[
\|u(t,\cdot)\|_{\mathcal{H}^{\beta+2}_{\mathcal{L}}(G)}\lesssim \|u_0\|_{\mathcal{H}_{\mathcal{L}}^{\beta+2}(G)}+t\|u_1\|_{\mathcal{H}_{\mathcal{L}}^{\beta+2}(G)}.
	\]
	\item If $(u_0,u_1)\in \mathcal{H}^{\beta+2}_{\mathcal{L}}(G)\times \mathcal{H}_{\mathcal{L}}^{\beta+\frac{2(\alpha-1)}{\alpha}}(G)$ then there exists a unique solution $u(t,\cdot)\in \mathcal{H}^{\beta+2}_{\mathcal{L}}(G)$ for any $t\in(0,+\infty)$ for the Cauchy problem \eqref{WaveTypeEquationG} given explicitly by \eqref{solution-wave}, and we have
\[
\|u(t,\cdot)\|_{\mathcal{H}^{\beta+2}_{\mathcal{L}}(G)}\lesssim \|u_0\|_{\mathcal{H}_{\mathcal{L}}^{\beta+2}(G)}+t\|u_1\|_{\mathcal{H}_{\mathcal{L}}^{\beta}(G)}+\|u_1\|_{\mathcal{H}_{\mathcal{L}}^{\beta+\frac{2(\alpha-1)}{\alpha}}(G)}.
			\]
	\end{enumerate}
	Additionally, we can get that \begin{equation}\label{55}
		\|\partial_{t}u(t,\cdot)\|_{\mathcal{H}^{\beta}_{\mathcal{L}}(G)}\lesssim \left\{
		\begin{array}{rccl}
			t^{-1}\|u_0\|_{\mathcal{H}^{\beta}_{\mathcal{L}}(G)}+\|u_1\|_{\mathcal{H}^{\beta}_{\mathcal{L}}(G)},& u_{0},u_1\in \mathcal{H}^{\beta}_{\mathcal{L}}(G),\\
			\|u_0\|_{\mathcal{H}^{\beta+2/\alpha}_{\mathcal{L}}(G)}+\|u_1\|_{\mathcal{H}^{\beta}_{\mathcal{L}}(G)},& u_0\in \mathcal{H}^{\beta+2/\alpha}_{\mathcal{L}}(G),\, u_1\in \mathcal{H}^{\beta}_{\mathcal{L}}(G),
		\end{array}
		\right.
	\end{equation}
	for all $t\in(0,+\infty).$ All the above constants do not depend of $t>0.$
\end{theorem}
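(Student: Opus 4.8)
The plan is to reduce everything to the Fourier side via the Plancherel formula
\[
\|v\|_{\mathcal{H}^{\sigma}_{\mathcal{L}}(G)}^2=\sum_{[\xi]\in\widehat G}d_\xi\sum_{i,j=1}^{d_\xi}(1+\mu_{i,\xi})^{\sigma}|\widehat v(\xi)_{ij}|^2,
\]
so that matters become uniform estimates for scalar sequences indexed by the joint eigenvalues $\mu_{i,\xi}\geqslant0$. First I would take the group Fourier transform of \eqref{WaveTypeEquationG} in $x$; using left invariance and diagonalising $\sigma_{\mathcal{L}}(\xi)=\mathrm{diag}(\mu_{1,\xi},\dots,\mu_{d_\xi,\xi})$ as in the proof of Theorem~\ref{Main-heat}, I get for each $i,j$ the scalar Caputo ODE ${}^{C}\partial_t^{\alpha}\widehat u(t,\xi)_{ij}+\mu_{i,\xi}\widehat u(t,\xi)_{ij}=0$ with $\widehat u(0,\xi)_{ij}=\widehat{u_0}(\xi)_{ij}$ and $\partial_t\widehat u(0,\xi)_{ij}=\widehat{u_1}(\xi)_{ij}$. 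Applying the Laplace transform gives $\widetilde{\widehat u}(s,\xi)_{ij}=\frac{s^{\alpha-1}}{s^\alpha+\mu_{i,\xi}}\widehat{u_0}(\xi)_{ij}+\frac{s^{\alpha-2}}{s^\alpha+\mu_{i,\xi}}\widehat{u_1}(\xi)_{ij}$, and inverting (using the standard Mittag-Leffler Laplace pairs) yields $\widehat u(t,\xi)_{ij}=E_\alpha(-\mu_{i,\xi}t^\alpha)\widehat{u_0}(\xi)_{ij}+tE_{\alpha,2}(-\mu_{i,\xi}t^\alpha)\widehat{u_1}(\xi)_{ij}$. Taking the inverse Fourier transform and folding the power series back into the operator exactly as in Theorem~\ref{Main-heat} gives the closed form \eqref{solution-wave}; uniqueness follows since on each Fourier block the problem is a scalar linear fractional ODE with a unique solution.

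Next I would establish the quantitative bounds. The whole analysis rests on two scalar facts for $1<\alpha<2$ and $x\geqslant0$: boundedness $|E_\alpha(-x)|\lesssim 1$ and $|E_{\alpha,2}(-x)|\lesssim 1$ (from \cite[P. 35]{page 35}), together with the decay $|E_\alpha(-x)|\lesssim (1+x)^{-1}$ and $|E_{\alpha,2}(-x)|\lesssim(1+x)^{-1}$, which let me trade a factor of $x$ (hence of $\mu_{i,\xi}$, hence of $(1+\mu_{i,\xi})$ up to adding a harmless bounded term) against a factor of $t^{-\alpha}$. Concretely, to control $\|u(t,\cdot)\|_{\mathcal{H}^{\beta+2}_{\mathcal{L}}}$ I must bound $(1+\mu)^{(\beta+2)/2}E_\alpha(-\mu t^\alpha)$ against $(1+t^{-\alpha})(1+\mu)^{\beta/2}$: this is the elementary inequality $(1+\mu)\,|E_\alpha(-\mu t^\alpha)|\lesssim 1+t^{-\alpha}$, obtained by splitting $1+\mu\leqslant 1+\mu t^\alpha\cdot t^{-\alpha}\le (1+t^{-\alpha})(1+\mu t^\alpha)$ (when $t\le1$, else use $1+\mu\lesssim(1+\mu t^\alpha)$ directly) and then using the $(1+\mu t^\alpha)^{-1}$ decay. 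The $u_1$-term is handled the same way with $E_{\alpha,2}$ replacing $E_\alpha$, the overall prefactor $t$ coming from \eqref{solution-wave}; this gives item~(1). The remaining items (2)–(6) are then just bookkeeping: wherever a datum already lives in a two-orders-higher space one does \emph{not} spend the $(1+\mu)$ on the propagator, so one uses only boundedness of the Mittag-Leffler function and keeps $(1+\mu)^{(\beta+2)/2}|\widehat{u_j}(\xi)|$ directly; item~(3)/(6) interpolate the two strategies, spending $(1+\mu)^{(\alpha-1)/\alpha}$ on $E_{\alpha,2}$ via $(1+\mu)^{1/\alpha}|E_{\alpha,2}(-\mu t^\alpha)|\lesssim 1+t^{-1}$ (the same splitting but extracting only an $\alpha$-th power), which produces the characteristic exponent $\frac{2(\alpha-1)}{\alpha}$ in the Sobolev index.

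Finally, for the $\partial_t u$ estimate \eqref{55} I would differentiate \eqref{solution-wave} in $t$. Using $\frac{d}{dt}E_\alpha(-\mu t^\alpha)=-\mu t^{\alpha-1}E_{\alpha,\alpha}(-\mu t^\alpha)$ and $\frac{d}{dt}\big(tE_{\alpha,2}(-\mu t^\alpha)\big)=E_{\alpha,1}(-\mu t^\alpha)=E_\alpha(-\mu t^\alpha)$ (the latter being a standard Mittag-Leffler identity), one gets $\partial_t\widehat u(t,\xi)_{ij}=-\mu_{i,\xi}t^{\alpha-1}E_{\alpha,\alpha}(-\mu_{i,\xi}t^\alpha)\widehat{u_0}(\xi)_{ij}+E_\alpha(-\mu_{i,\xi}t^\alpha)\widehat{u_1}(\xi)_{ij}$. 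The $u_1$-term is immediately bounded in $\mathcal{H}^{\beta}_{\mathcal{L}}$ by $\|u_1\|_{\mathcal{H}^{\beta}_{\mathcal{L}}}$ by boundedness of $E_\alpha$; for the $u_0$-term I use $\mu t^{\alpha-1}|E_{\alpha,\alpha}(-\mu t^\alpha)|\lesssim t^{-1}\cdot\frac{\mu t^\alpha}{1+\mu t^\alpha}\lesssim t^{-1}$ to get the first line of \eqref{55}, or instead $\mu t^{\alpha-1}|E_{\alpha,\alpha}(-\mu t^\alpha)|\lesssim \mu^{1/\alpha}(\mu t^\alpha)^{1-1/\alpha}t^{\alpha-1}(1+\mu t^\alpha)^{-1}\lesssim \mu^{1/\alpha}$ to get the second (spending $(1+\mu)^{1/\alpha}$, i.e.\ two extra orders divided by $\alpha$, on $u_0$). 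I expect the main obstacle to be purely technical: assembling the precise elementary inequalities for $E_\alpha,E_{\alpha,2},E_{\alpha,\alpha}$ of negative argument that convert powers of $\mu$ into powers of $t^{-1}$ with the exact exponents appearing in (1)–(6) and \eqref{55}, and making sure these are uniform in $t\in(0,\infty)$ across the regimes $t\le1$ and $t\ge1$; the functional-analytic content (Plancherel, diagonalisation, folding the series back into $\mathcal{L}$) is routine once Theorem~\ref{Main-heat}'s argument is in place.
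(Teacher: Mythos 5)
Your proposal follows essentially the same route as the paper's proof: group Fourier transform and diagonalisation of $\sigma_{\mathcal{L}}(\xi)$, Laplace transform and Mittag--Leffler inversion to obtain \eqref{solution-wave}, then Plancherel combined with the bound $|E_{\alpha,\rho}(-x)|\lesssim (1+x)^{-1}$ and elementary trades of powers of $\mu_{i,\xi}$ against powers of $t^{-1}$; your derivation of \eqref{55} (differentiating the series, using $\frac{d}{dt}\big(tE_{\alpha,2}(-\mu t^{\alpha})\big)=E_{\alpha}(-\mu t^{\alpha})$ and optimising $\mu t^{\alpha-1}|E_{\alpha,\alpha}(-\mu t^{\alpha})|$ in $t$) is also the paper's argument up to cosmetic rearrangement.

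The one step that does not deliver the stated inequality as written is your treatment of items (3) and (6). The multiplicative split $(1+\mu)=(1+\mu)^{(\alpha-1)/\alpha}(1+\mu)^{1/\alpha}$ together with $(1+\mu)^{1/\alpha}|E_{\alpha,2}(-\mu t^{\alpha})|\lesssim 1+t^{-1}$ gives $t(1+\mu)|E_{\alpha,2}(-\mu t^{\alpha})|\lesssim (1+t)(1+\mu)^{(\alpha-1)/\alpha}$, hence the bound $(1+t)\|u_1\|_{\mathcal{H}^{\beta+2(\alpha-1)/\alpha}_{\mathcal{L}}(G)}$, which is strictly weaker than the claimed $t\|u_1\|_{\mathcal{H}^{\beta}_{\mathcal{L}}(G)}+\|u_1\|_{\mathcal{H}^{\beta+2(\alpha-1)/\alpha}_{\mathcal{L}}(G)}$ (consider $t$ large and $u_1$ with $\|u_1\|_{\mathcal{H}^{\beta+2(\alpha-1)/\alpha}_{\mathcal{L}}(G)}\gg\|u_1\|_{\mathcal{H}^{\beta}_{\mathcal{L}}(G)}$). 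The paper instead splits additively: $(1+\mu)\frac{t}{1+\mu t^{\alpha}}\leqslant t+\mu\sup_{t>0}\frac{t}{1+\mu t^{\alpha}}=t+C_{\alpha}\mu^{(\alpha-1)/\alpha}$, the supremum being attained at $t=(\mu(\alpha-1))^{-1/\alpha}$, which yields the two-term right-hand side at once. Alternatively, your two inequalities do suffice if you use the $t(1+t^{-\alpha})$ bound for $t\geqslant 1$ and the interpolated one for $t\leqslant 1$ --- the regime split you only allude to at the end --- but that case distinction is precisely what closes the argument and must be made explicit. Everything else (closed form, blockwise uniqueness, items (1), (2), (4), (5), and \eqref{55}) is in order.
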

\begin{proof}
	We first note that by the spectral calculus it is enough to prove the theorem for $\beta=0$. Let $\widehat{u}(t,\xi)$, for $[\xi]\in \widehat{G}$, denote the group Fourier transform of $u$ with respect to the variable $x$. Thus
	\[
	\left\{ \begin{aligned}
		^{C}\partial_{t}^{\alpha}\widehat{u}(t,\xi)+\sigma_{\mathcal{L}}(\xi)\widehat{u}(t,\xi)&=0, \quad t>0, \\
		\widehat{u}(t,\xi)|_{_{_{t=0}}}&=\widehat{u_0}(\xi),  \\
		\partial_t \widehat{u}(t,\xi)|_{_{_{t=0}}}&=\widehat{u_1}(\xi).
	\end{aligned}
	\right.
	\]
	Now we have a system of scalar ODEs, which depend on the dimension $d_\xi$ of the representation $[\xi]$. So
	
	\[
	\left\{ \begin{aligned}
		^{C}\partial_{t}^{\alpha}\widehat{u}(t,\xi)_{ij}+\mu_{i,\xi}\widehat{u}(t,\xi)_{ij}&=0, \quad t>0, \\
		\widehat{u}(t,\xi)_{ij}|_{_{_{t=0}}}&=\widehat{u_0}(\xi)_{ij},  \\
		\partial_t \widehat{u}(t,\xi)_{ij}|_{_{_{t=0}}}&=\widehat{u_1}(\xi)_{ij},
	\end{aligned}
	\right.
	\]
	for any $i,j\in \{1,\ldots,d_\xi\}.$ Let us now apply the Laplace transform in the time-variable, to obtain
	\[
	\left\{ \begin{aligned}
		s^{\alpha}\widetilde{\widehat{u}}(s,\xi)_{ij}-s^{\alpha-1}\widehat{u_0}(\xi)_{ij}-s^{\alpha-2}\widehat{u_1}(\xi)_{ij}+\mu_{i,\xi} \widetilde{\widehat{u}}(s,\xi)_{ij}&=0, \quad s>0, \\
		\widehat{u}(t,\xi)_{ij}|_{_{_{t=0}}}&=\widehat{u_0}(\xi)_{ij},  \\
		\partial_t \widehat{u}(t,\xi)_{ij}|_{_{_{t=0}}}&=\widehat{u_1}(\xi)_{ij}.
	\end{aligned}
	\right.
	\]
	Thus, it follows that
	\[
	\widetilde{\widehat{u}}(s,\xi)_{ij}=\frac{s^{\alpha-1}}{s^{\alpha}+\mu_{i,\xi}}\widehat{u_0}(\xi)_{ij}+\frac{s^{\alpha-2}}{s^{\alpha}+\mu_{i,\xi}}\widehat{u_1}(\xi)_{ij}, \quad t>0, 
	\]
	and by the application of the inverse Laplace transform (see e.g. \cite[Theorem 2.1]{new-mittag-add}) we arrive at
	\begin{equation}\label{transform}
		\widehat{u}(t,\xi)_{ij}=E_{\alpha}(-\mu_{i,\xi}t^{\alpha})\widehat{u_0}(\xi)_{ij}+tE_{\alpha,2}(-\mu_{i,\xi}t^{\alpha})\widehat{u_1}(\xi)_{ij}.
	\end{equation}
	Now we apply the inverse Fourier transform on $G$ and get the explicit solution of the considered problem as follows: 
	\begin{align*}
		u(t,x)&=\sum_{[\xi]\in\widehat{G}}d_{\xi}\sum_{i,j=1}^{d_{\xi}}\widehat{u}(t,\xi)_{ij}\xi(x)_{ji} \\
		&=\sum_{[\xi]\in\widehat{G}}d_{\xi}\sum_{i,j=1}^{d_{\xi}}\left(E_{\alpha}(-\mu_{i,\xi}t^{\alpha})\widehat{u_0}(\xi)_{ij}+tE_{\alpha,2}(-\mu_{i,\xi}t^{\alpha})\widehat{u_1}(\xi)_{ij}\right)\xi(x)_{ji}\\
		&=\sum_{[\xi]\in\widehat{G}}d_{\xi}\sum_{i,j=1}^{d_{\xi}}E_{\alpha}(-\mu_{i,\xi}t^{\alpha})\widehat{u_0}(\xi)_{ij}\xi(x)_{ji}+t\sum_{[\xi]\in\widehat{G}}d_{\xi}\sum_{i,j=1}^{d_{\xi}}E_{\alpha,2}(-\mu_{i,\xi}t^{\alpha})\widehat{u_1}(\xi)_{ij}\xi(x)_{ji}\\
		&=E_\alpha(-t^\alpha \mathcal{L})u_0(x) + t E_{\alpha,2}(-t^\alpha\mathcal{L})u_1(x).
	\end{align*}
	
	On the other hand, by the equivalence in \eqref{transform} and estimate \cite[P. 35]{page 35} we obtain
	
	\begin{align}
		|\widehat{u}(t,\xi)_{ij}|&\leqslant  |E_{\alpha}(-\mu_{i,\xi}t^{\alpha})||\widehat{u_0}(\xi)_{ij}|+t|E_{\alpha,2}(-\mu_{i,\xi}t^{\alpha})||\widehat{u_1}(\xi)_{ij}| \nonumber\\
		&\leqslant C\frac{1}{1+\mu_{i,\xi}t^{\alpha}}|\widehat{u_0}(\xi)_{ij}|+C\frac{t}{1+\mu_{i,\xi}t^{\alpha}}|\widehat{u_1}(\xi)_{ij}|. \label{unomas}
	\end{align}
	
	Remember that some representations can have different eigenvalues which can be zero or positive. Taking it into account we first have 
\begin{align*}
\|u(t,\cdot)\|_{\mathcal{H}_{\mathcal{L}}^2(G)}^2&=\sum_{[\xi]\in\widehat{G}}d_\xi \sum_{i,j=1}^{d_\xi}(1+\mu_{i,\xi})^{2} |\widehat{u}(t,\xi)_{ij}|^2 \\ &\lesssim\sum_{[\xi]\in\widehat{G}}d_\xi \sum_{i,j=1}^{d_\xi}\frac{(1+\mu_{i,\xi})^{2}}{(1+\mu_{i,\xi}t^{\alpha})^{2}}\big(|\widehat{u_0}(\xi)_{ij}|^2+t^{2}|\widehat{u_1}(\xi)_{ij}|^2\big),
\end{align*}	
and therefore
\begin{align*}
\sum_{[\xi]\in\widehat{G}}d_\xi \sum_{i,j=1}^{d_\xi}\frac{(1+\mu_{i,\xi})^{2}}{(1+\mu_{i,\xi}t^{\alpha})^{2}}&|\widehat{u_0}(\xi)_{ij}|^2 \leqslant \sum_{[\xi]\in\widehat{G}}d_\xi \sum_{i,j=1}^{d_\xi}\left(1+\frac{\mu_{i,\xi}}{1+\mu_{i,\xi} t^{\alpha}}\right)^2|\widehat{u_0}(\xi)_{ij}|^2 \\
&\lesssim \left\{
\begin{array}{rccl}
&\displaystyle(1+t^{-\alpha})^2\sum_{[\xi]\in\widehat{G}}d_\xi \sum_{i,j=1}^{d_\xi}|\widehat{u_0}(\xi)_{ij}|^2=(1+t^{-\alpha})^2\|u_0\|_{L^2(G)}^2, \\
&\displaystyle\sum_{[\xi]\in\widehat{G}}d_\xi \sum_{i,j=1}^{d_\xi}(1+\mu_{i,\xi})^2|\widehat{u_0}(\xi)_{ij}|^2=\|u_0\|_{\mathcal{H}_{\mathcal{L}}^2(G)}^2. 
\end{array}   
\right. 
\end{align*}
The above estimates mean that we may not gain any decay due to the eigenvalues which are zero. We also have that 
\begin{align*}
\sum_{[\xi]\in\widehat{G}}d_\xi \sum_{i,j=1}^{d_\xi}&\frac{(1+\mu_{i,\xi})^{2}}{(1+\mu_{i,\xi}t^{\alpha})^{2}}t^2|\widehat{u_1}(\xi)_{ij}|^2\leqslant \sum_{[\xi]\in\widehat{G}}d_\xi \sum_{i,j=1}^{d_\xi}\left(1+\frac{\mu_{i,\xi}}{1+\mu_{i,\xi} t^{\alpha}}\right)^2 t^2|\widehat{u_1}(\xi)_{ij}|^2 \\
&\lesssim \left\{
\begin{array}{rccl}
&\displaystyle t^2(1+t^{-\alpha})^2\sum_{[\xi]\in\widehat{G}}d_\xi \sum_{i,j=1}^{d_\xi}|\widehat{u_1}(\xi)_{ij}|^2=t^2(1+t^{-\alpha})^2\|u_1\|_{L^2(G)}^2, \\
&\displaystyle t^2\sum_{[\xi]\in\widehat{G}}d_\xi \sum_{i,j=1}^{d_\xi}(1+\mu_{i,\xi})^2|\widehat{u_1}(\xi)_{ij}|^2=t^2 \|u_1\|_{\mathcal{H}_{\mathcal{L}}^2(G)}^2, \\
&\displaystyle \sum_{[\xi]\in\widehat{G}}d_\xi \sum_{i,j=1}^{d_\xi}\left(t+\mu_{i,\xi}\sup_{t>0}\frac{t}{1+\mu_{i,\xi} t^{\alpha}}\right)^2|\widehat{u_1}(\xi)_{ij}|^2.
\end{array}   
\right.
\end{align*}
One can check that the function $g(t)=\frac{t}{1+\mu_{i,\xi}t^{\alpha}}$ has a positive maximum at the point $t=\frac{1}{(\mu_{i,\xi}(\alpha-1))^{1/\alpha}}.$ Thus
	\begin{align*}
	\sum_{[\xi]\in\widehat{G}}d_\xi \sum_{i,j=1}^{d_\xi}&\left(t+\mu_{i,\xi}\sup_{t>0}\frac{t}{1+\mu_{i,\xi} t^{\alpha}}\right)^2|\widehat{u_1}(\xi)_{ij}|^2 \\
	&\leqslant t^2	\sum_{[\xi]\in\widehat{G}}d_\xi \sum_{i,j=1}^{d_\xi}|\widehat{u_1}(\xi)_{ij}|^2+C_{\alpha}  \sum_{[\xi]\in\widehat{G}}d_\xi \sum_{i,j=1}^{d_\xi}\mu_{i,\xi}^{\frac{2(\alpha-1)}{\alpha}}|\widehat{u_1}(\xi)_{ij}|^2 \nonumber\\
		&=t^2\|u_1\|_{L^2(G)}^2+ C_{\alpha}\|\mathcal{L}^{\frac{\alpha-1}{\alpha}}u_1\|_{L^2(G)}^2, 
	\end{align*}
	for some positive constant $C_\alpha$ which depends only on $\alpha.$ This implies that 
\begin{align*}
\sum_{[\xi]\in\widehat{G}}d_\xi \sum_{i,j=1}^{d_\xi}\frac{(1+\mu_{i,\xi})^{2}}{(1+\mu_{i,\xi}t^{\alpha})^{2}}t^2|\widehat{u_1}(\xi)_{ij}|^2 &\lesssim \left\{
\begin{array}{rccl}
&t^2(1+t^{-\alpha})^2\|u_1\|_{L^2(G)}^2, \\
&t^2 \|u_1\|_{\mathcal{H}_{\mathcal{L}}^2(G)}^2, \\
&t^2\|u_1\|_{L^2(G)}^2+ \|\mathcal{L}^{\frac{\alpha-1}{\alpha}}u_1\|_{L^2(G)}^2.
\end{array}   
\right.
\end{align*}
Finally, combining the above estimates with 
\begin{align*}
\sum_{[\xi]\in\widehat{G}}d_\xi \sum_{i,j=1}^{d_\xi}\frac{(1+\mu_{i,\xi})^{2}}{(1+\mu_{i,\xi}t^{\alpha})^{2}}|\widehat{u_0}(\xi)_{ij}|^2 &\lesssim \left\{
\begin{array}{rccl}
&(1+t^{-\alpha})^2\|u_0\|_{L^2(G)}^2, \\
&\|u_0\|_{\mathcal{H}_{\mathcal{L}}^2(G)}^2, 
\end{array}   
\right. 
\end{align*}
we arrive at all the possible cases for $\|u(t,\cdot)\|_{\mathcal{H}_{\mathcal{L}}^2}^2$, keeping in mind for some of the cases that $\mathcal{H}_{\mathcal{L}}^{\beta+\frac{2(\alpha-1)}{\alpha}}(G)\subset \mathcal{H}^{\beta}_{\mathcal{L}}(G)$ for any $\beta\in\mathbb{R}.$

	Let us prove the last inequality \eqref{55} of this theorem. Notice that by equation \eqref{transform} and the property $\Gamma(\gamma+1)=\gamma\Gamma(\gamma)$ for $\Re(\gamma)>0$ we get
	\begin{align*}
		\partial_t \widehat{u}(t,\xi)_{ij}&=\sum_{k=1}^{+\infty}\frac{(-\mu_{i,\xi})^k \alpha kt^{\alpha k-1}}{\Gamma(\alpha k+1)}\widehat{u}_0(\xi)_{ij}+\sum_{k=0}^{+\infty}\frac{(-\mu_{i,\xi})^k (\alpha k+1)t^{\alpha k}}{\Gamma(\alpha k+2)}\widehat{u}_1(\xi)_{ij} \\
		&=-\mu_{i,\xi}t^{-1+\alpha}E_{\alpha,\alpha}(-\mu_{i,\xi}t^{\alpha})\widehat{u_0}(\xi)_{ij}+E_{\alpha}(-\mu_{i,\xi}t^{\alpha})\widehat{u_1}(\xi)_{ij},   
	\end{align*}
	where $E_{\alpha,\alpha}$ is defined in \eqref{bimittag}. This implies 
	\begin{align*}
		|\partial_t \widehat{u}(t,\xi)_{ij}|^2 &\lesssim t^{2(\alpha-1)}\mu_{i,\xi}^2|E_{\alpha,\alpha}(-\mu_{i,\xi}t^{\alpha})|^2|\widehat{u_0}(\xi)_{ij}|^2+|E_{\alpha}(-\mu_{i,\xi}t^{\alpha})|^2 |\widehat{u_1}(\xi)_{ij}|^2 \\
		&\lesssim \frac{t^{2(\alpha-1)}\mu_{i,\xi}^2}{1+\mu_{i,\xi}^2 t^{2\alpha}}|\widehat{u_0}(\xi)_{ij}|^2+\frac{1}{1+\mu_{i,\xi}^2 t^{2\alpha}} |\widehat{u_1}(\xi)_{ij}|^2 \\
		&\lesssim \frac{t^{2(\alpha-1)}\mu_{i,\xi}^2}{1+\mu_{i,\xi}^2 t^{2\alpha}}|\widehat{u_0}(\xi)_{ij}|^2+|\widehat{u_1}(\xi)_{ij}|^2 \\
		&\lesssim \left\{
		\begin{array}{rccl}
			t^{-2}|\widehat{u_0}(\xi)_{ij}|^2+|\widehat{u_1}(\xi)_{ij}|^2,\\
			\underbrace{\mu_{i,\xi}^2\,\displaystyle\sup_{t>0}\left(\frac{t^{2(\alpha-1)}}{1+\mu_{i,\xi}^2 t^{2\alpha}}\right)}_{\text{supremum at $t=(\alpha-1)^{1/2\alpha}/\mu_{i,\xi}^{1/\alpha}$}}|\widehat{u_0}(\xi)_{ij}|^2+|\widehat{u_1}(\xi)_{ij}|^2,
		\end{array} 
		\right. \\
		&\lesssim \left\{
		\begin{array}{rccl}
			t^{-2}|\widehat{u_0}(\xi)_{ij}|^2+|\widehat{u_1}(\xi)_{ij}|^2,\\
			\mu_{i,\xi}^{2/\alpha}|\widehat{u_0}(\xi)_{ij}|^2+|\widehat{u_1}(\xi)_{ij}|^2,
		\end{array}
		\right.
	\end{align*}
	in view of the estimate \cite[P. 35]{page 35}.
	Therefore, by the Plancherel's formula and the above inequality, it follows that
	\begin{align*}
		\|\partial_t u(t,\cdot)\|_{L^2(G)}^2&=\sum_{[\xi]\in\widehat{G}}d_\xi \sum_{i,j=1}^{d_\xi}|\partial_t\widehat{u}(t,\xi)_{ij}|^2 \\
		&\lesssim\left\{
		\begin{array}{rccl}
			\displaystyle\sum_{[\xi]\in\widehat{G}}d_\xi \sum_{i,j=1}^{d_\xi}\big(t^{-2}|\widehat{u_0}(\xi)_{ij}|^2+|\widehat{u_1}(\xi)_{ij}|^2\big),\\
			\displaystyle\sum_{[\xi]\in\widehat{G}}d_\xi \sum_{i,j=1}^{d_\xi}\big(\mu_{i,\xi}^{2/\alpha}|\widehat{u_0}(\xi)_{ij}|^2+|\widehat{u_1}(\xi)_{ij}|^2\big),
		\end{array} 
		\right.
		\\
		&\lesssim \left\{
		\begin{array}{rccl}
			t^{-2}\|u_0\|_{L^2(G)}^2+\|u_1\|_{L^2(G)}^2,&\quad u_{0},u_1\in L^2(G),\\
			\|u_0\|_{\mathcal{H}^{2/\alpha}_{\mathcal{L}}(G)}^2+\|u_1\|_{L^2(G)}^2,&\quad u_0\in \mathcal{H}^{2/\alpha}_{\mathcal{L}}(G),\, u_1\in L^2(G),
		\end{array}
		\right.
	\end{align*} 
	proving \eqref{55}.
\end{proof}

\begin{remark}
	In Theorem \ref{Main-wave-a}, we can see that the propagators are expressed by the Mittag-Leffler functions $E_\alpha(-t)$, $E_{\alpha,2}(-t)$ for $t>0$ and $1<\alpha<2$. For $E_\alpha(-t)$, it is known that it has finite zeros \cite{[16]} (see also \cite{[15]}). Also,  $E_{\alpha,2}(-t)$ has finite zeros in this range, see e.g. \cite[P. 4]{zeros2} or \cite{zeros}.
\end{remark}

\begin{remark}
	Notice that from inequality \eqref{unomas} we can also get
	\[
	\|u(t,\cdot)\|_{L^2(G)}\lesssim \|(1+t^{\alpha}\mathcal{L})^{-1}u_0\|_{L^2(G)}+t\|(1+t^{\alpha}\mathcal{L})^{-1}u_1\|_{L^2(G)},\quad t\in(0,T].
	\]
\end{remark}

	\section{Multi-term heat type equations}\label{multi-section}

In this section we treat the case of multi-term heat type equations. We study the following equation:  
\begin{equation}\label{Multi-HeatTypeEquationG}
	\left\{ \begin{aligned}
		\prescript{C}{}\partial_{t}^{\alpha_0}u(t,x)+\gamma_1\prescript{C}{}\partial_{t}^{\alpha_1}u(t,x)+\cdots+\gamma_m\prescript{C}{}\partial_{t}^{\alpha_m}u(t,x)+\mathcal{L}u(t,x)&=0,\,\,  \\
		u(t,x)|_{_{_{t=0}}}&=u_0(x),
	\end{aligned}
	\right.
\end{equation}
for $t>0$ and $x\in G$, where $\mathcal{L}$ is a positive linear left invariant operator on $G$ (we always assume $\mathcal{L}: C^\infty(G) \to C^\infty(G)$ to be continuous), $u_0$ will be taken in a suitable Sobolev space, $\gamma_i>0$ $(i=1,\ldots,m)$ and $0<\alpha_m<\alpha_{m-1}<\cdots<\alpha_1<\alpha_0\leqslant1$.

The solution of equation \eqref{Multi-HeatTypeEquationG} is connected with the so-called multivariate Mittag-Leffler function, see \cite{ML-defined,operational1}, for recent extensions see e.g. \cite{nuevo,new-mittag-add}. This function together with its Laplace transform is an important ingredient in our analysis in this section. 
\begin{definition}\label{multivariate-def}
	Let $\alpha_{i},\lambda\in\mathbb{R}$ $(i=1,\ldots,m)$ with $\alpha_{i}>0$. The  \textit{multivariate Mittag-Leffler function} is defined as (\cite{ML-defined})
	\begin{equation}\label{multivariateML}
		E_{(\alpha_1,\ldots,\alpha_m),\lambda}(w_1,\ldots,w_m)=\sum_{k_1=0}^{\infty}\cdots\sum_{k_m=0}^{\infty}\frac{(k_1+\cdots+k_m)!}{\Gamma(\alpha_1 k_1+\cdots+\alpha_m k_m+\lambda)}\frac{w_1^{k_1}}{k_{1}!}\cdots\frac{w_m^{k_m}}{k_{m}!},
	\end{equation}
	for any complex numbers $w_1,\ldots,w_m\in\mathbb{C}$.
\end{definition}
Notice that the function in \eqref{multivariateML} can be also associated to a special case of the well-known Lauricella functions. This function is absolutely and locally uniformly convergent for the given parameters.

\medskip Below we use a very useful estimate of the multivariate Mittag-Leffler function established in \cite[Lemma 3.2]{multi-estimate}. The only disadvantage of the latter estimate is that $0<t\leqslant T<+\infty$. This means that up to now, to the best of our knowledge, there is not an uniform estimate for the multivariate Mittag-Leffler function where the constant does not depend on $t$.  

\begin{theorem}
	Let $G$ be a compact Lie group and $\beta\in\mathbb{R}$. Suppose also that $\mathcal{L}$ is a positive linear left invariant operator on $G$. 
	\begin{enumerate}
		\item If $u_0\in \mathcal{H}^{\beta}_{\mathcal{L}}(G)$ then there exists a unique solution $u(t,\cdot)\in \mathcal{H}^{\beta+2}_{\mathcal{L}}(G)$ for any $t\in(0,T]$ for the Cauchy problem \eqref{Multi-HeatTypeEquationG} given explicitly by
		\begin{equation}
			u(t,x)=\sum_{k=0}^{m}t^{\alpha_0-\alpha_k}E_{(\alpha_0-\alpha_1,\ldots,\alpha_0-\alpha_m,\alpha_0),\alpha_0-\alpha_k+1}(-\gamma_1 t^{\alpha_0-\alpha_1},\ldots,-\gamma_m t^{\alpha_0-\alpha_m},-t^{\alpha_0}\mathcal{L}) u_0(x),\label{multi-solution}
		\end{equation}
		and we have 
		\[
		\|u(t,\cdot)\|_{\mathcal{H}_{\mathcal{L}}^{\beta+2}(G)}\leqslant C_{T,\alpha_0,\ldots,\alpha_m}\left(\sum_{k=0}^{m}\gamma_k t^{\alpha_0-\alpha_k}\right)(1+t^{-\alpha_0})\|u_0\|_{\mathcal{H}^{\beta}_{\mathcal{L}}(G)}. 
		\]
		\item If $u_0\in \mathcal{H}_{\mathcal{L}}^{\beta+2}(G)$ then there exists a unique solution $u(t,\cdot)\in \mathcal{H}^{\beta+2}_{\mathcal{L}}(G)$ for any $t\in(0,T]$ for the Cauchy problem \eqref{Multi-HeatTypeEquationG} given explicitly by \eqref{multi-solution}, and we have 
		\[
		\|u(t,\cdot)\|_{\mathcal{H}_{\mathcal{L}}^{\beta+2}(G)}\leqslant C_{T,\alpha_0,\ldots,\alpha_m}\left(\sum_{k=0}^{m}\gamma_k t^{\alpha_0-\alpha_k}\right)\|u_0\|_{\mathcal{H}^{\beta+2}_{\mathcal{L}}(G)}. 
		\] 
	\end{enumerate}
\end{theorem}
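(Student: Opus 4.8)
The plan is to mirror the proof strategy already used for Theorem~\ref{Main-heat} and Theorem~\ref{Main-wave-a}: pass to the group Fourier transform, reduce to scalar fractional ODEs indexed by the joint eigenvalues $\mu_{i,\xi}$ of $\mathcal{L}$, solve them via the Laplace transform, identify the solution in terms of the multivariate Mittag-Leffler function, and then reassemble via the inverse Fourier transform. By the spectral calculus it is again enough to treat $\beta=0$. Writing $\widehat{u}(t,\xi)_{ij}$ for the matrix entries, equation \eqref{Multi-HeatTypeEquationG} becomes, for each $i,j\in\{1,\dots,d_\xi\}$,
\[
\prescript{C}{}\partial_t^{\alpha_0}\widehat{u}(t,\xi)_{ij}+\gamma_1\prescript{C}{}\partial_t^{\alpha_1}\widehat{u}(t,\xi)_{ij}+\cdots+\gamma_m\prescript{C}{}\partial_t^{\alpha_m}\widehat{u}(t,\xi)_{ij}+\mu_{i,\xi}\widehat{u}(t,\xi)_{ij}=0,
\]
with $\widehat{u}(t,\xi)_{ij}|_{t=0}=\widehat{u_0}(\xi)_{ij}$. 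Since $0<\alpha_m<\cdots<\alpha_0\leqslant 1$, all the Caputo derivatives only involve the single initial value $\widehat{u_0}(\xi)_{ij}$, so applying the Laplace transform and using $\widetilde{\prescript{C}{}\partial_t^{\alpha}f}(s)=s^{\alpha}\widetilde f(s)-s^{\alpha-1}f(0)$ gives
\[
\widetilde{\widehat{u}}(s,\xi)_{ij}=\frac{s^{\alpha_0-1}+\gamma_1 s^{\alpha_1-1}+\cdots+\gamma_m s^{\alpha_m-1}}{s^{\alpha_0}+\gamma_1 s^{\alpha_1}+\cdots+\gamma_m s^{\alpha_m}+\mu_{i,\xi}}\,\widehat{u_0}(\xi)_{ij}.
\]
Inverting this Laplace transform — which is the classical computation behind the multivariate Mittag-Leffler function, available from \cite{ML-defined,new-mittag-add} by expanding the denominator as a geometric series in $s^{-(\alpha_0-\alpha_k)}$ and $s^{-\alpha_0}\mu_{i,\xi}$ and integrating term by term — yields exactly
\[
\widehat{u}(t,\xi)_{ij}=\sum_{k=0}^{m}t^{\alpha_0-\alpha_k}E_{(\alpha_0-\alpha_1,\ldots,\alpha_0-\alpha_m,\alpha_0),\alpha_0-\alpha_k+1}\big(-\gamma_1 t^{\alpha_0-\alpha_1},\ldots,-\gamma_m t^{\alpha_0-\alpha_m},-t^{\alpha_0}\mu_{i,\xi}\big)\widehat{u_0}(\xi)_{ij},
\]
with the convention $\gamma_0=1$; applying $\mathcal{F}_G^{-1}$ and collecting the series in $\mathcal{L}$ gives the claimed representation \eqref{multi-solution}. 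Uniqueness follows because each scalar Laplace-transformed equation has a unique solution, so the Fourier coefficients are determined.

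For the two norm estimates I would use the estimate of the multivariate Mittag-Leffler function from \cite[Lemma 3.2]{multi-estimate}, which (in the relevant specialisation) bounds
\[
\Big|t^{\alpha_0-\alpha_k}E_{(\alpha_0-\alpha_1,\ldots,\alpha_0-\alpha_m,\alpha_0),\alpha_0-\alpha_k+1}\big(-\gamma_1 t^{\alpha_0-\alpha_1},\ldots,-t^{\alpha_0}\mu_{i,\xi}\big)\Big|\leqslant \frac{C_{T,\alpha_0,\ldots,\alpha_m}\,\gamma_k t^{\alpha_0-\alpha_k}}{1+\mu_{i,\xi}t^{\alpha_0}}
\]
uniformly for $0<t\leqslant T$. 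Summing over $k$ and using $\widehat{u}(t,\xi)_{ij}=\big(\sum_k \cdots\big)\widehat{u_0}(\xi)_{ij}$ gives
\[
|\widehat{u}(t,\xi)_{ij}|\leqslant C_{T,\alpha_0,\ldots,\alpha_m}\Big(\sum_{k=0}^{m}\gamma_k t^{\alpha_0-\alpha_k}\Big)\frac{1}{1+\mu_{i,\xi}t^{\alpha_0}}|\widehat{u_0}(\xi)_{ij}|.
\]
Then, exactly as in the proof of Theorem~\ref{Main-wave-a}, I estimate
\[
\|u(t,\cdot)\|_{\mathcal{H}_{\mathcal{L}}^{2}(G)}^2=\sum_{[\xi]\in\widehat{G}}d_\xi\sum_{i,j=1}^{d_\xi}(1+\mu_{i,\xi})^2|\widehat{u}(t,\xi)_{ij}|^2
\]
by controlling $\dfrac{(1+\mu_{i,\xi})^2}{(1+\mu_{i,\xi}t^{\alpha_0})^2}$: for the first case bound $\dfrac{1+\mu_{i,\xi}}{1+\mu_{i,\xi}t^{\alpha_0}}\leqslant 1+t^{-\alpha_0}$ (using $\mu_{i,\xi}\geqslant 0$ and splitting into $\mu_{i,\xi}t^{\alpha_0}\leqslant 1$ and $\mu_{i,\xi}t^{\alpha_0}\geqslant 1$), giving the factor $(1+t^{-\alpha_0})$ against $\|u_0\|_{\mathcal{H}^\beta_{\mathcal{L}}(G)}$; for the second case simply bound $\dfrac{1}{(1+\mu_{i,\xi}t^{\alpha_0})^2}\leqslant 1$, trading one power $(1+\mu_{i,\xi})^2$ onto $u_0$ to get $\|u_0\|_{\mathcal{H}^{\beta+2}_{\mathcal{L}}(G)}$. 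Reintroducing the spectral weight $(1+\mathcal{L})^{\beta/2}$ via the Plancherel identity recalled in Section~\ref{wave-section} recovers the general $\beta$. One should also note $u(t,\cdot)\in\mathcal{H}^{\beta+2}_{\mathcal{L}}(G)$ follows from finiteness of these sums, and continuity/differentiability in $t$ from the local uniform convergence of the multivariate Mittag-Leffler series.

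The main obstacle, I expect, is not any single step but making the application of \cite[Lemma 3.2]{multi-estimate} precise: that lemma is stated for the multivariate Mittag-Leffler function with fixed numerical arguments, whereas here the last slot carries the operator-eigenvalue $-t^{\alpha_0}\mu_{i,\xi}$ which ranges over $[0,\infty)$, so one must check the cited bound is uniform in $\mu_{i,\xi}$ and extracts the decay factor $(1+\mu_{i,\xi}t^{\alpha_0})^{-1}$ rather than merely $(1+t^{\alpha_0})^{-1}$ or a $\mu_{i,\xi}$-dependent constant. This is exactly why the restriction $t\in(0,T]$ appears — as remarked before the theorem, no $t$-uniform estimate of the multivariate Mittag-Leffler function is available — so the constant $C_{T,\alpha_0,\ldots,\alpha_m}$ genuinely depends on $T$. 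A secondary technical point is justifying the termwise inverse Laplace transform and the interchange of the $k$-sum with the double series defining the multivariate function and with the group sum $\sum_{[\xi]}$; all of these are legitimate by absolute and locally uniform convergence (stated in the paragraph after Definition~\ref{multivariate-def}) together with the fact that $\widehat{u_0}\in L^2(\widehat G)$.
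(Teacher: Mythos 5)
Your proposal follows essentially the same route as the paper's proof: reduction to $\beta=0$ by spectral calculus, group Fourier transform to scalar Caputo ODEs, Laplace transform and inversion via the multivariate Mittag-Leffler function, then the bound of \cite[Lemma 3.2]{multi-estimate} on $(0,T]$ combined with the Plancherel-type Sobolev norm computation, splitting $(1+\mu_{i,\xi})/(1+\mu_{i,\xi}t^{\alpha_0})$ into the two cases exactly as the paper does. The only cosmetic difference is the placement of the coefficients $\gamma_k$ (with $\gamma_0=1$) in the formula for $\widehat{u}(t,\xi)_{ij}$, a discrepancy already present between the paper's statement and its own proof, and it does not affect the estimates.
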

\begin{proof}
	Again by the spectral calculus it is enough to prove the theorem for $\beta=0$. By applying the Fourier transform on $G$ to equation \eqref{Multi-HeatTypeEquationG} we have
	\[
	\left\{ \begin{aligned}
		\prescript{C}{}\partial_{t}^{\alpha_0}\widehat{u}(t,\xi)+\gamma_1\prescript{C}{}\partial_{t}^{\alpha_1}\widehat{u}(t,\xi)+\cdots+\gamma_m\prescript{C}{}\partial_{t}^{\alpha_m}\widehat{u}(t,\xi)+\sigma_{P}(\xi)\widehat{u}(t,\xi)&=0, \quad t>0, \\
		\widehat{u}(t,\xi)|_{_{_{t=0}}}&=\widehat{u_0}(\xi).
	\end{aligned}
	\right.
	\]
	Thus we arrive into the system of scalar ODEs:
	\[
	\left\{ \begin{aligned}
		^{C}\partial_{t}^{\alpha_0}\widehat{u}(t,\xi)_{ij}+\gamma_1\prescript{C}{}\partial_{t}^{\alpha_1}\widehat{u}(t,\xi)_{ij}+\cdots+\gamma_m\prescript{C}{}\partial_{t}^{\alpha_m}\widehat{u}(t,\xi)_{ij}+\mu_{i,\xi}\widehat{u}(t,\xi)_{ij}&=0, \\
		\widehat{u}(t,\xi)_{ij}|_{_{_{t=0}}}&=\widehat{u_0}(\xi)_{ij},
	\end{aligned}
	\right.
	\]
	for any $i,j\in \{1,\ldots,d_\xi\}.$ We apply the Laplace transform in the time-variable, and get
	\[
	\left\{ \begin{aligned}
		s^{\alpha_0}\widetilde{\widehat{u}}(s,\xi)_{ij}-s^{\alpha_0-1}\widehat{u_0}(\xi)_{ij}+\gamma_1 s^{\alpha_1}\widetilde{\widehat{u}}(s,\xi)_{ij}-\gamma_1 s^{\alpha_1-1}\widehat{u_0}(\xi)_{ij}+&\cdots \\
		&\hspace{-9cm}\cdots+\gamma_m s^{\alpha_m}\widetilde{\widehat{u}}(s,\xi)_{ij}-\gamma_m s^{\alpha_m-1}\widehat{u_0}(\xi)_{ij}+\mu_{i,\xi}\widetilde{\widehat{u}}(s,\xi)_{ij}=0, \quad s>0, \\
		\widehat{u}(t,\xi)_{ij}|_{_{_{t=0}}}&=\widehat{u_0}(\xi)_{ij}.
	\end{aligned}
	\right.
	\]
	Therefore
	\[
	\widetilde{\widehat{u}}(s,\xi)_{ij}=\frac{s^{\alpha_0-1}+\gamma_1 s^{\alpha_1-1}+\cdots+\gamma_m s^{\alpha_m-1}}{s^{\alpha_0}+\gamma_1 s^{\alpha_1}+\cdots+\gamma_m s^{\alpha_m}+\mu_{i,\xi}}\widehat{u_0}(\xi)_{ij}, \quad s>0, 
	\]
	and by the application of the inverse Laplace transform (see e.g. \cite[Theorem 2.1]{new-mittag-add}) we get
	\begin{align*}
		\widehat{u}(t,\xi)_{ij}=\sum_{k=0}^{m}\gamma_k t^{\alpha_0-\alpha_k}&E_{(\alpha_0-\alpha_1,\ldots,\alpha_0-\alpha_m,\alpha_0),\alpha_0-\alpha_k+1}(-\gamma_1 t^{\alpha_0-\alpha_1},\ldots \\
		&\hspace{3cm}\ldots,-\gamma_m t^{\alpha_0-\alpha_m},-\mu_{i,\xi} t^{\alpha_0})\widehat{u_0}(\xi)_{ij},
	\end{align*}
	where $\gamma_0=1.$ By applying the inverse Fourier transform on $G$ to the above equality we obtain the desired representation of the solution. By the above equivalence and \cite[Lemma 3.2]{multi-estimate} we have
	
	\begin{align}
		|\widehat{u}(t,\xi)_{ij}|&\leqslant \sum_{k=0}^{m}\gamma_k t^{\alpha_0-\alpha_k}|E_{(\alpha_0-\alpha_1,\ldots,\alpha_0-\alpha_m,\alpha_0),\alpha_0-\alpha_k+1}(-\gamma_1 t^{\alpha_0-\alpha_1},\ldots \nonumber\\
		&\hspace{5cm}\ldots,-\gamma_m t^{\alpha_0-\alpha_m},-\mu_{i,\xi} t^{\alpha_0})||\widehat{u_0}(\xi)_{ij}| \nonumber\\
		&\leqslant C_{T,\alpha_0,\ldots,\alpha_m}\frac{|\widehat{u_0}(\xi)_{ij}|}{1+\mu_{i,\xi} t^{\alpha_0}}\sum_{k=0}^{m}\gamma_k t^{\alpha_0-\alpha_k},\quad 0<t\leqslant T.\label{new-estimate-l2}
	\end{align}
	Thus
	\begin{align*}
		\|u(t,\cdot)\|_{\mathcal{H}_{\mathcal{L}}^2}^2&=\sum_{[\xi]\in\widehat{G}}d_\xi \sum_{i,j=1}^{d_\xi}(1+\mu_{i,\xi})^2|\widehat{u}(s,\xi)_{ij}|^2 \\
		&\leqslant C_{T,\alpha_0,\ldots,\alpha_m}^{2}\left(\sum_{k=0}^{m}\gamma_k t^{\alpha_0-\alpha_k}\right)^2\sum_{[\xi]\in\widehat{G}}d_\xi \sum_{i,j=1}^{d_\xi}\frac{(1+\mu_{i,\xi})^2}{(1+\mu_{i,\xi} t^{\alpha_0})^2}|\widehat{u_0}(\xi)_{ij}|^2 \\
		&\leqslant C_{T,\alpha_0,\ldots,\alpha_m}^{2}\left(\sum_{k=0}^{m}\gamma_k t^{\alpha_0-\alpha_k}\right)^2\sum_{[\xi]\in\widehat{G}}d_\xi \sum_{i,j=1}^{d_\xi}\left(1+\frac{\mu_{i,\xi}}{1+\mu_{i,\xi} t^{\alpha_0}}\right)^2|\widehat{u_0}(\xi)_{ij}|^2 \\
		&\leqslant \left\{
		\begin{array}{rccl}
			\displaystyle C_{T,\alpha_0,\ldots,\alpha_m}^{2}\left(\sum_{k=0}^{m}\gamma_k t^{\alpha_0-\alpha_k}\right)^2 (1+t^{-\alpha_0})^2 \sum_{[\xi]\in\widehat{G}}d_\xi \sum_{i,j=1}^{d_\xi} |\widehat{u_0}(\xi)_{ij}|^2,\\
			C_{T,\alpha_0,\ldots,\alpha_m}^{2}\displaystyle\left(\sum_{k=0}^{m}\gamma_k t^{\alpha_0-\alpha_k}\right)^2\sum_{[\xi]\in\widehat{G}}d_\xi \sum_{i,j=1}^{d_\xi}(1+\mu_{i,\xi})^2 |\widehat{u_0}(\xi)_{ij}|^2,
		\end{array} \right. \\
		&\leqslant \left\{
		\begin{array}{rccl}
			\displaystyle C_{T,\alpha_0,\ldots,\alpha_m}^{2}\displaystyle\left(\sum_{k=0}^{m}\gamma_k t^{\alpha_0-\alpha_k}\right)^2 (1+t^{-\alpha_0})^2 \|u_0\|_{L^2(G)}^2,\\
			C_{T,\alpha_0,\ldots,\alpha_m}^{2}\displaystyle\left(\sum_{k=0}^{m}\gamma_k t^{\alpha_0-\alpha_k}\right)^2\|u_0\|_{\mathcal{H}_{\mathcal{L}}^2(G)}^2,
		\end{array}
		\right.
	\end{align*}
	completing the proof.
\end{proof}

\begin{remark}
	Notice that we will not study the multi-term wave type equation since for some of its propagator terms we can not use the estimate of the multivariate Mittag-Leffler function \cite[Lemma 3.2]{multi-estimate} to prove that the norm is bounded in the considered solution-space. By the same reason we are not yet able to give $L^p(G)-L^q(G)$ estimates for the solution. 
\end{remark}

\begin{remark}
	From the estimate \eqref{new-estimate-l2} we can get a better estimate for the $L^2$-norm of the solution of equation \eqref{Multi-HeatTypeEquationG}. In fact, we obtain
	\[
	\|u(t,\cdot)\|_{L^2(G)}\leqslant C_{T,\alpha_0,\ldots,\alpha_m}\sum_{k=0}^{m}\gamma_k t^{\alpha_0-\alpha_k}\|(1+t^{\alpha_0}\mathcal{L})^{-1}u_0\|_{L^2(G)},\quad t\in(0,T].
	\]
\end{remark}

\section{Acknowledgements}
The authors were supported by the FWO Odysseus 1 grant G.0H94.18N: Analysis and Partial Differential Equations and by the Methusalem programme of the Ghent University Special Research Fund (BOF) (Grant number 01M01021). MR is also supported by EPSRC grant EP/R003025/2 and FWO Senior Research Grant G011522N.

\end{document}